\documentclass[12pt]{amsart}

\usepackage{amsthm, amsmath, amssymb, amsrefs, fullpage}

 %% torus
\newcommand{\C}{\mathbb{C}} %% complex numbers
\newcommand{\D}{\mathbb{D}} %% unit disk
\newcommand{\R}{\mathbb{R}}
\newcommand{\Z}{\mathbb{Z}} %% integers
\newcommand{\N}{\mathbb{N}}
 %% closed disk
 %% closed tri-disk
 %% closed bidisk
 %% closed polydisk
\newcommand{\al}{\alpha}

\newcommand{\tr}{\text{tr}}
\renewcommand{\Re}{\text{Re}}
\renewcommand{\Im}{\text{Im}}

\numberwithin{equation}{section}

\newtheorem{theorem}{Theorem}[section]

\newtheorem{lemma}[theorem]{Lemma}
\newtheorem{definition}[theorem]{Definition}

\title{Global bounds on stable polynomials}

\author{Greg Knese}
\address{Washington University in St. Louis\\ Department of
  Mathematics, One Brookings Drive, St. Louis, MO 63130}
\email{geknese@wustl.edu}

\thanks{This research was supported by NSF grant DMS-1363239}

\keywords{Stable polynomial, determinantal representation, entire
  function}

\subjclass[2010]{Primary 32A60; Secondary 32A15, 30A64, 30C15, 30D15}

\date{\today}

\begin{document}
\bibliographystyle{plain}

\maketitle

\begin{abstract}
A classical inequality of Sz\'asz bounds polynomials
with no zeros in the upper half plane entirely in terms of
their first few coefficients.   Borcea-Br\"and\'en generalized this
result to several variables as a piece of 
their characterization of linear maps
on polynomials preserving stability.  In this paper, we improve
Sz\'asz's original inequality, use determinantal representations to
prove Sz\'asz type inequalities in two variables, and then prove that
one can use the two variable inequality to prove an inequality for
several variables.  
\end{abstract}

\tableofcontents

\section{Introduction}

We say $p \in \C[z]$ is \emph{stable} if $p$ has
no zeros in $\C_+:=\{z\in \C: \Im z > 0\}$.

This note is about improvements and generalizations of the following
classical inequality of O.\ Sz\'asz.

\begin{theorem}[Sz\'asz \cite{Sz}] \label{szasz}
If $p(z) = \sum_{j=0}^{d}c_j z^j \in \C[z]$ is stable and $p(0)=1$ 
then
\[
|p(z)| \leq \exp(|z||c_1| + 3|z|^2(|c_1|^2 + |c_2|)).
\]
\end{theorem}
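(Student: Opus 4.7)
The plan is to factor $p$ explicitly and exploit the geometric fact that stability places the reciprocal roots in a half-plane. Since $p(0) = 1$ and $p$ has no zeros in $\C_+$, write
\[
p(z) = \prod_{j}(1 - w_j z), \qquad w_j = 1/z_j,
\]
where $z_j$ ranges over the (necessarily nonzero) roots of $p$. Stability is equivalent to the statement that every $w_j$ lies in the closed upper half-plane, i.e.\ $\Im w_j \geq 0$. The Newton identities then read $\sum_j w_j = -c_1$ and $\sum_j w_j^2 = c_1^2 - 2c_2$.

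Next I would reduce the problem to controlling the second power sum $\sum_j |w_j|^2$. Applying the elementary inequality $\log y \leq y-1$ to $y = |1-zw_j|^2$ gives
\[
2\log|1-zw_j| \;\leq\; -2\Re(zw_j) + |z|^2|w_j|^2,
\]
and summing over $j$ yields
\[
\log|p(z)| \;\leq\; \Re(zc_1) + \tfrac{1}{2}|z|^2\sum_j |w_j|^2 \;\leq\; |z||c_1| + \tfrac{1}{2}|z|^2 \sum_j|w_j|^2.
\]

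The crux of the proof — and the step where the half-plane hypothesis is genuinely needed — is bounding $\sum_j |w_j|^2$ in terms of $|c_1|$ and $|c_2|$ alone. From $|w|^2 = \Re(w^2) + 2(\Im w)^2$ I obtain
\[
\sum_j|w_j|^2 \;=\; \Re(c_1^2 - 2c_2) + 2\sum_j (\Im w_j)^2,
\]
and because the $\Im w_j$ are \emph{all} nonnegative,
\[
\sum_j(\Im w_j)^2 \;\leq\; \Bigl(\sum_j \Im w_j\Bigr)^2 = (\Im c_1)^2 \leq |c_1|^2.
\]
This yields $\sum_j|w_j|^2 \leq 3|c_1|^2 + 2|c_2|$, whence
\[
|p(z)| \;\leq\; \exp\!\Bigl(|z||c_1| + \tfrac{|z|^2}{2}\bigl(3|c_1|^2 + 2|c_2|\bigr)\Bigr),
\]
which is strictly sharper than and therefore implies the stated inequality. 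The main obstacle I anticipate is precisely the same-sign trick for $\sum(\Im w_j)^2$: without the half-plane constraint, the quantity $\sum_j |w_j|^2$ cannot be controlled by $c_1$ and $c_2$ alone, because nothing prevents cancellations in the power sums from masking large individual $|w_j|$.
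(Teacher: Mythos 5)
Your proof is correct (and in fact yields a slightly sharper constant than the stated $3(|c_1|^2+|c_2|)$), and it is essentially the same approach the paper itself uses when proving its improved version, Theorem \ref{szimprove}: factor $p$ into linear factors whose reciprocal roots lie in a closed half-plane, apply $\log y\le y-1$ (the paper's Lemma \ref{loglemma}), and exploit the common sign of the imaginary parts to bound $\sum_j|w_j|^2$ by the first two coefficients, which is the content of the paper's Lemma \ref{squareslemma}. Indeed, your identity $\sum_j|w_j|^2=\Re(c_1^2-2c_2)+2\sum_j(\Im w_j)^2$ combined with $\sum_j(\Im w_j)^2\le(\Im c_1)^2$ gives exactly $\sum_j|w_j|^2\le|c_1|^2-2\Re c_2$, so had you not applied the final crude estimates you would have recovered the improved bound of Theorem \ref{szimprove} verbatim.
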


The purpose of the theorem is to prove 
\[
\mathcal{F}_C = \{p\in \C[z]: p \text{ is stable}, p(0)=1, |p'(0)|, |p''(0)| \leq C\}
\]
is a normal family whose local uniform limits 
are entire functions of order at most $2$.
One can use this to give a complete characterization of the local
uniform limits of stable polynomials.  See Theorem 4 Chapter VIII
(page 334) of \cite{Levin}.

Only recently have multivariable Sz\'asz type inequalities
been considered.  We say $p \in \C[z_1,\dots, z_n]$ 
is \emph{stable} if $p$ has no zeros in $(\C_{+})^n$.
In their groundbreaking characterization of linear operators 
on polynomials $T:\C[z_1,\dots, z_n] \to \C[z_1,\dots, z_n]$ that
preserve stability, Borcea-Br\"and\'en \cite{BB} established a Sz\'asz type
inequality. Its purpose was to 
prove that the \emph{symbol} of $T$
\[
\overline{G_T}(z,w) = \sum_{\alpha \in \N^n} (-1)^{\alpha} T(z^\alpha) \frac{w^\alpha}{\alpha!}
\]
is actually an entire function.  Formally, the symbol is 
given as $\overline{G_T}(z,w) = T[e^{-z\cdot w}]$.

We let $e_1,\dots, e_n$ be standard basis vectors
of $\Z^n$. 

\begin{theorem}[Borcea-Br\"and\'en Theorem 6.5\cite{BB}] \label{BBthm}
Suppose that $p(z) = \sum_\beta a(\beta) z^{\beta} \in \C[z_1,\dots, z_n]$ is stable with 
$p(0)=1$.  Let
\[
B = 2^{n-1} \frac{\sqrt{2e^2-e}}{e-1} = 2^{n-1}\cdot 2.0210\dots,
\]
\[
C=6e^2\left(\sum_{i=1}^{n}|a(e_i)|\right)^2 + 
4e^2\sum_{i,j=1}^{n} |a(e_i+e_j)|.
\]
Then,
\[
|p(z)| \leq B\exp(C\|z\|_{\infty}^2).
\]
\end{theorem}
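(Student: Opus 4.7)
My approach is induction on $n$, taking Theorem~\ref{szasz} as the base case. I would first reformulate the one-variable inequality in the clean form $|p(z)|\leq B_1\exp(C_1|z|^2)$ by absorbing the linear term $|z||c_1|$ into the quadratic term via AM--GM, $|z||c_1|\leq t|c_1|^2+|z|^2/(4t)$, and optimizing over $t>0$; the specific prefactor $B_1=\sqrt{2e^2-e}/(e-1)$ in the statement should emerge from this optimization. The inductive step from $n-1$ to $n$ will then contribute exactly one factor of $2$ to the prefactor, yielding $B=2^{n-1}B_1$.

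For the inductive step, the key observation is that one-variable slices of stable polynomials are themselves stable: for any $z'\in\overline{\C_+}^{n-1}$, the polynomial $w\mapsto p(z',w)$ has no zeros in $\C_+$ (stability of $p$ for $z'\in\C_+^{n-1}$ strictly, extended to the boundary by Hurwitz). Given stable $p$ with $p(0)=1$ and $z'\in\overline{\C_+}^{n-1}$, I would factor $p(z)=p(0,\dots,0,z_n)\cdot p(z',z_n)/p(0,\dots,0,z_n)$ (perturbing if necessary to avoid zeros of the denominator), bound the first factor using the reformulated base case applied to the stable univariate $w\mapsto p(0,\dots,0,w)$, and bound the second factor using the inductive hypothesis applied to the stable $(n-1)$-variable polynomial $z'\mapsto p(z',z_n)/p(0,\dots,0,z_n)$, which has constant term $1$. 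The partial derivatives $a(e_i)$ and $a(e_i+e_j)$ entering the final exponent $C$ come out of both factors via the chain rule.

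The main obstacle is the precise bookkeeping of constants. The inductive bound on the second factor depends on derivatives of the quotient polynomial at $z'=0$, which are rational functions of $z_n$ involving mixed derivatives of $p$; controlling them calls for Cauchy--Schwarz-type combinations that assemble the one-dimensional and $(n-1)$-dimensional contributions into the full symmetric sums $\sum_{i=1}^n|a(e_i)|$ and $\sum_{i,j=1}^n|a(e_i+e_j)|$ appearing in $C$. A secondary difficulty is extending the bound from $z'\in\overline{\C_+}^{n-1}$ to general $z'\in\C^{n-1}$, since the slicing argument only gives stability on the closed upper tube: I expect this extension is where the factor of $2$ per variable (hence the $2^{n-1}$ in $B$) actually enters, via a Phragm\'en--Lindel\"of argument applied coordinate by coordinate.
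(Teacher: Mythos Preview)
This theorem is not proved in the paper; it is quoted from Borcea--Br\"and\'en, and the paper only sketches their method in one sentence: use the one-variable Sz\'asz estimate~\eqref{BBsz} together with stability-preserving linear operators to bound every coefficient $a(\beta)$, then reassemble $p$. Your inductive-slicing scheme is a different route from that, and also from the paper's own approach to its improved bounds (determinantal representations in two variables, followed by a reduction of $n$ variables to two as in the proof of Theorem~\ref{msz}).

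That said, your proposal has a real gap in the inductive step. Applying the $(n-1)$-variable hypothesis to $z'\mapsto p(z',z_n)/p(0,\dots,0,z_n)$ yields an exponent built from the first and second $z'$-partials of this quotient at $z'=0$; these are rational functions of $z_n$ such as $p_i(0,z_n)/p(0,z_n)$, and they are not uniformly bounded as $|z_n|\to\infty$. So the inductive bound carries a $z_n$-dependent exponent, and ``Cauchy--Schwarz-type combinations'' is not a mechanism for removing that dependence and landing on the fixed constant $C$. Your guess about the origin of $2^{n-1}$ is also off: extending from $\overline{\C_+}^n$ to $\C^n$ is free, since the reflection inequality (Lemma~\ref{Polya2}, applied coordinate by coordinate as in the proof of Theorem~\ref{msz}) gives $|p(z)|\le|p(\tilde z)|$ for some $\tilde z\in\overline{\C_+}^n$ with $\|\tilde z\|_\infty=\|z\|_\infty$, so no Phragm\'en--Lindel\"of factor is incurred there. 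If the precise prefactor $2^{n-1}\sqrt{2e^2-e}/(e-1)$ is to appear, it has to come out of the coefficient-bounding machinery in the original Borcea--Br\"and\'en argument, not from the extension step.
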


Here $\|z\|_{\infty} = \max\{|z_j|: 1\leq j \leq n\}$.

The original proof of this theorem uses an
 inequality of Sz\'asz (actually inequality \eqref{BBsz} below)
along with some linear operators that preserve stability
to bound all of the coefficients $\{a(\beta)\}$ and
then reassemble $p$ to obtain the bound above.

The goal of this paper is to improve both Theorem \ref{szasz} and Theorem \ref{BBthm} and to give a more
geometric proof in the multivariable case.  

Our strategy is to first make some minor improvements 
to Sz\'asz's original argument, then to use determinantal
representations of two variable stable polynomials to prove 
a version of Theorem \ref{BBthm} in two variables, and finally
to show that an inequality in the two variable case yields an
inequality in several variables.  We also show how 
to handle the case where $p(0)=0$.  

First, Sz\'asz's inequality can be slightly improved to the following.

\begin{theorem} \label{szimprove}
Suppose $p(z) = \sum_{j=0}^{d}p_j z^j \in \C[z]$ is stable and $p(0)=1$.
Then,
\[
|p(z)| \leq \exp( \Re(p_1 z) + \frac{1}{2}(|p_1|^2 - 2\Re(p_2))|z|^2).
\]
The inequality is sharp on the imaginary axis for stable $p \in
\R[z]$. 
Specifically, given $c_1,c_2 \in \R$ with $\gamma := \frac{1}{2}(c_1^2-2c_2)> 0$
there exist stable polynomials $p_n \in \R[z]$ with $p_n=1+c_1z+c_2z^2
+ \dots$ such that
\[
\lim_{n\to \infty} |p_n(iy)| = \exp(\gamma y^2).
\]
\end{theorem}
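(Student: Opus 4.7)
The plan is to factor $p$ over its roots and apply a sharp elementary estimate to each factor, then recognize the coefficient $|p_1|^2 - 2\Re(p_2)$ as an upper bound for the natural sum that appears on the roots side. Since $p(0) = 1$, no root is zero, and stability gives every root $z_j$ with $\Im z_j \leq 0$; equivalently, $a_j := 1/z_j$ satisfies $\Im a_j \geq 0$. Writing $p(z) = \prod_{j=1}^d (1 - a_j z)$, Newton's identities give $p_1 = -\sum_j a_j$ and $p_2 = \sum_{j<k} a_j a_k$.

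The key is the elementary inequality $\log|1-w|^2 = \log(1 - 2\Re w + |w|^2) \leq -2\Re w + |w|^2$, which follows from $\log(1+u) \leq u$. Applied with $w = a_j z$ and summed over $j$, this gives
\[
\log|p(z)| \leq \Re(p_1 z) + \tfrac{1}{2}\Bigl(\sum_j |a_j|^2\Bigr)|z|^2.
\]
Expanding $a_j = \alpha_j + i\beta_j$ and comparing with $p_1 = -\sum_j a_j$, $p_2 = \sum_{j<k} a_j a_k$, a direct calculation yields the identity $|p_1|^2 - 2\Re(p_2) = \sum_j |a_j|^2 + 4\sum_{j<k}\beta_j\beta_k$. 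Since stability forces $\beta_j \geq 0$, the cross-term is nonnegative, so $\sum_j |a_j|^2 \leq |p_1|^2 - 2\Re(p_2)$, which finishes the bound (and incidentally shows nonnegativity of the leading coefficient of $|z|^2$).

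For sharpness along the imaginary axis, I would test the explicit family
\[
p_n(z) = \Bigl(1 + \tfrac{c_1}{n}z\Bigr)^n\Bigl(1 - \tfrac{\gamma_n}{n}z^2\Bigr)^n, \qquad \gamma_n := \tfrac{n-1}{2n}c_1^2 - c_2,
\]
whose roots are all real once $n$ is large (since $\gamma_n \to \gamma > 0$), so each $p_n \in \R[z]$ is stable. Direct expansion gives $p_n(0)=1$, $p_n'(0)=c_1$, and $p_n''(0)/2 = c_2$. At $z = iy$ the first factor has modulus $(1 + c_1^2 y^2/n^2)^{n/2} \to 1$, while the second has modulus $(1 + \gamma_n y^2/n)^n \to e^{\gamma y^2}$, as required. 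No step is a genuine obstacle; the crux is using $\Re(w)$ rather than $|w|$ in the $\log$ estimate, together with the observation that same-sign imaginary parts of the $a_j$ convert the roots-side quantity $\sum_j |a_j|^2$ upward into the coefficient-side quantity $|p_1|^2 - 2\Re(p_2)$.
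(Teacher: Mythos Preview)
Your argument is correct and matches the paper's proof essentially line for line: the same elementary estimate $\log|1+w|\le \Re w+\tfrac12|w|^2$ on each linear factor, the same identity showing $\sum_j|a_j|^2$ differs from $|p_1|^2-2\Re p_2$ by $4\sum_{j<k}\beta_j\beta_k\ge 0$, and the same sharpness family (your $\gamma_n$ equals the paper's $d_n=\gamma-c_1^2/(2n)$). The only cosmetic difference is your sign convention $p(z)=\prod(1-a_jz)$ with $\Im a_j\ge 0$ versus the paper's $\prod(1+\alpha_jz)$ with $\Im\alpha_j\le 0$.
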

 
We are unsure about sharpness more generally.
Note that necessarily $c_1^2-2c_2 \geq 0$ if $p=1+c_1z+c_2z^2+\dots
\in \R[z]$ is stable (e.g. examine $c_1,c_2$ in terms of roots)
and $c_1^2-2c_2=0$ if and only if $p\equiv 1$.

The next theorem subsumes this theorem, however 
we feel it is instructive to go through the 
elementary one variable argument as a warm-up in
Section \ref{sec:szimprove}.

Using determinantal representations for 
stable two variable polynomials we are able to offer 
the following improvement on Theorem \ref{BBthm}.
Below, $p_j = \frac{\partial p}{\partial z_j}$ and $p_{j,k} = \frac{\partial^2 p}{\partial z_j \partial z_k}$. 

\begin{theorem} \label{bisz}
Let $p\in \C[z_1,z_2]$ be stable.  If $p(0)=1$,
then 
\begin{equation} \label{mainineq}
|p(z)|\leq\exp(\Re(\sum_{j=1}^{2} z_j p_j(0)) +
\frac{1}{2}\|z\|_{\infty}^2 (|\sum_{j=1}^{2} p_j(0)|^2-\Re(\sum_{j,k=1}^{2} p_{j,k}(0)))).
\end{equation}
\end{theorem}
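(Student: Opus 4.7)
The plan is to reduce the inequality to a matrix-trace computation by exploiting a determinantal representation of stable bivariate polynomials, which I would carry out in three stages.

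\emph{Stage 1 (Determinantal representation).} After a density argument replacing $p$ by a strictly stable perturbation, my goal is to write
\[
p(z_1,z_2)=\det(I + z_1 E_1 + z_2 E_2)
\]
for Hermitian positive semi-definite matrices $E_1, E_2$. This is the two-variable analogue of the one-variable fact that a stable polynomial factors as $\det(I + zT)$ with $T$ having non-negative imaginary part; for real-coefficient $p$ it follows from Helton--Vinnikov, and more generally from the contractive determinantal representations of Grinshpan, Kaliuzhnyi-Verbovetskyi, Vinnikov, and Woerdeman. Such a representation automatically yields a stable polynomial because on $\C_+^2$ the matrix $I + z_1 E_1 + z_2 E_2$ has strictly positive-definite imaginary part.

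\emph{Stage 2 (Derivative data).} Expanding $\det(I+M)=1+\tr M+\tfrac12((\tr M)^2-\tr(M^2))+O(M^3)$ with $M = z_1E_1+z_2E_2$ gives
\[
p_j(0)=\tr E_j,\qquad \sum_{j,k=1}^{2} p_{j,k}(0) = (\tr(E_1+E_2))^2 - \tr((E_1+E_2)^2).
\]
Since $E_1+E_2$ is Hermitian, $\tr(E_1+E_2)\in\R$, whence
\[
\Big|\sum_j p_j(0)\Big|^2 - \Re\sum_{j,k} p_{j,k}(0) = \tr((E_1+E_2)^2),
\]
which is the quantity appearing in the right-hand side of \eqref{mainineq}.

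\emph{Stage 3 (Matrix log-determinant estimate and conclusion).} I would apply the elementary scalar bound $\log|1+\mu|\le\Re\mu+\tfrac12|\mu|^2$ to each eigenvalue $\mu_k$ of $M$ and combine it with Schur's bound $\sum_k|\mu_k|^2\le\tr(M^*M)$ to obtain
\[
\log|\det(I+M)| \le \Re \tr(M) + \tfrac12 \tr(M^*M).
\]
Positive semi-definiteness of $E_1, E_2$ gives $\tr(E_1 E_2) \geq 0$, and together with $|z_j|^2, |z_1\bar z_2| \leq \|z\|_\infty^2$ this collapses the cross term to yield $\tr(M^*M) \leq \|z\|_\infty^2 \tr((E_1+E_2)^2)$. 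Substituting back and translating via Stage 2 produces exactly inequality \eqref{mainineq}.

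The main obstacle I anticipate is Stage 1: a Hermitian PSD determinantal representation may not exist for general complex-coefficient stable $p$. The natural workaround is to permit $E_1, E_2$ to have merely positive semi-definite imaginary part, so that $\tr(E_1+E_2)$ need not be real; in that case the Stage 2 identifications produce an extra term $2(\Im\tr(E_1+E_2))^2$ (precisely the discrepancy between $|\sum p_j|^2 - \Re\sum p_{j,k}$ and $\Re\tr((E_1+E_2)^2)$ in the non-Hermitian case), and the bound on $\tr(M^*M)$ must be reworked to accommodate the non-Hermiticity. Carrying out this non-Hermitian bookkeeping cleanly is where I expect the real work to lie.
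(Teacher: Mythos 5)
Your overall strategy (determinantal representation, then $\log|\det(I+M)|\le \Re\,\tr M+\tfrac12\tr(M^*M)$, then trace bookkeeping) is the same as the paper's, but Stage 1, on which everything rests, is not correct, and the issue is not confined to complex coefficients. A representation $p(z)=\det(I+z_1E_1+z_2E_2)$ with $E_1,E_2$ Hermitian positive semi-definite forces $p(0,z_2)=\det(I+z_2E_2)$ to have nonnegative coefficients (the coefficient of $z_2$ is $\tr E_2\ge 0$). The real-coefficient stable polynomial $p(z_1,z_2)=(1+z_1)(1-z_2)$, normalized by $p(0)=1$, therefore admits no such representation, so neither Helton--Vinnikov nor the Grinshpan--Kaliuzhnyi-Verbovetskyi--Vinnikov--Woerdeman result gives your Stage 1 even in the real case. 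What those results actually yield (after a Cayley transform, as in Section \ref{sec:detrep}) is $p(z)=c\det(A+z_1B_1+z_2B_2)$ with $\Im A\ge 0$, $B_1,B_2\ge 0$ Hermitian and $B_1+B_2=I$; normalizing $p(0)=1$ gives $p=\det(I+z_1X_1+z_2X_2)$ with $X_j=B_jA^{-1}$, and these $X_j$ are neither Hermitian nor do they individually have sign-definite imaginary part. Your proposed fallback (allowing $E_j$ with merely positive semi-definite imaginary part) is thus not a representation known to exist, nor would it obviously imply stability, since $\Im(z_jE_j)=x_j\Im E_j+y_j\Re E_j$ need not be sign-definite for $z\in\C_+^2$.

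The ``non-Hermitian bookkeeping'' you defer at the end is precisely where the content of the proof lies, and it does not reduce to tracking an extra term like $(\Im\tr(E_1+E_2))^2$. With the correct representation one has $\tr(M^*M)=\tr\bigl[(\sum_j \bar z_jB_j)(\sum_k z_kB_k)A^{-1}(A^*)^{-1}\bigr]$, and this is controlled by three facts: $|\tr(NP)|\le\|N\|\,\tr P$ for $P\ge 0$ (Lemma \ref{PM}); $\|\sum_j z_jB_j\|\le\|z\|_\infty$ because the $B_j$ are PSD and sum to $I$ (Lemma \ref{sumB}); and the matrix analogue of the one-variable squares lemma, $\tr(M^*M)\le|\tr M|^2-\Re\bigl((\tr M)^2-\tr M^2\bigr)$ for $\Im M\ge 0$ (Lemma \ref{Im}), applied to the single common factor $A^{-1}$, whose imaginary part is $\le 0$. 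It is this last step, exploiting the shared factor $A^{-1}$ rather than sign-definiteness of the individual coefficient matrices, that produces the quantity $|\sum_jp_j(0)|^2-\Re\sum_{j,k}p_{j,k}(0)$ in \eqref{mainineq}. Your Stages 2--3 are fine conditional on the Hermitian representation, but since that representation fails in general, the argument as proposed has a genuine gap.
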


For comparison, using $a \leq (1+a^2)/2$ for the first term
in the exponential and writing $p = 1+\sum_{\beta\ne 0} a(\beta) z^{\beta}$ we get
\[
|p(z)| \leq \sqrt{e} \exp(C\|z\|_{\infty}^2)
\]
\[
C=(\sum_{j=1}^{2} |a(e_j)|)^2+\sum_{j,k=1}^{2} |\Re  [a(e_j+e_k)]|.
\]
Here we used $p_{jj}(0)= 2 a(2e_j)$ and $p_{jk}(0) = a(e_j+e_k)$ for
$j\ne k$.

It turns out that the two variable result can be used 
to prove an $n$-variable result.

\begin{theorem} \label{msz}
Suppose $p\in \C[z_1,\dots,z_n]$ is stable.  If $p(0)=1$ then 
\[
|p(z)| \leq \exp(
\sqrt{2}|\nabla p(0)||z|
+(|\nabla p(0)|^2+\|\Re Hp(0)\|)|z|^2)
\]
where $Hp$ is the Hessian of $p$ and $\|\cdot \|$ denotes operator norm.
\end{theorem}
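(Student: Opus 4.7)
The plan is to reduce the $n$-variable inequality to Theorem \ref{bisz} by restricting $p$ to a suitable two-dimensional complex slice of $\C^n$. First, by approximation (replace $p(z)$ with $p(z+i\epsilon(1,\dots,1))$ and let $\epsilon\downarrow 0$) one may assume $p$ has no zeros in the closed polydisc $\overline{(\C_+)^n}$, so that restrictions to boundary slices remain stable and the bound follows by continuity.

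The core construction: for any real non-negative vectors $v_1,v_2\in\R^n_{\geq 0}$, the linear map $\Phi(\zeta_1,\zeta_2):=\zeta_1 v_1+\zeta_2 v_2$ sends $(\C_+)^2$ into $\overline{(\C_+)^n}$, so the pullback $q(\zeta_1,\zeta_2):=p\circ\Phi(\zeta_1,\zeta_2)$ is a stable two-variable polynomial with $q(0)=1$. In the favorable case that $u:=\Re z$ and $v:=\Im z$ both lie in $\R^n_{\geq 0}$, take $v_1=u$, $v_2=v$, $(\zeta_1,\zeta_2)=(1,i)$, so $p(z)=q(1,i)$ and $\|(1,i)\|_\infty=1$. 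Using the identities $q_1(0)+iq_2(0)=\nabla p(0)\cdot z$ and $q_{1,1}(0)+2q_{1,2}(0)+q_{2,2}(0)=(u+v)^T Hp(0)(u+v)$, together with the fact that $u+v$ is real so that $\Re((u+v)^T Hp(0)(u+v))=(u+v)^T \Re Hp(0)(u+v)$, Theorem \ref{bisz} yields
\[
|p(z)|\leq \exp\Bigl(\Re(\nabla p(0)\cdot z)+\tfrac{1}{2}\bigl(|\nabla p(0)\cdot(u+v)|^2-(u+v)^T\Re Hp(0)(u+v)\bigr)\Bigr).
\]
Standard estimates then finish this case: Cauchy--Schwarz gives $\Re(\nabla p(0)\cdot z)\leq|\nabla p(0)||z|$ and $|\nabla p(0)\cdot(u+v)|^2\leq|\nabla p(0)|^2|u+v|^2$; the parallelogram inequality gives $|u+v|^2\leq 2(|u|^2+|v|^2)=2|z|^2$; and $|x^T Ax|\leq\|A\|\,|x|^2$ for real symmetric $A$ bounds the Hessian term, producing an exponent $|\nabla p(0)||z|+(|\nabla p(0)|^2+\|\Re Hp(0)\|)|z|^2$ (in fact slightly sharper than the stated bound in the linear term).

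The main obstacle is extending this to arbitrary complex $z$: if $u$ or $v$ has entries of mixed sign, the real subspace $\R u+\R v\subseteq \R^n$ may fail to contain a basis of non-negative vectors (for instance when $u=(1,1,-2)$ and $v=(1,-2,1)$), so the direct choice $v_1=u,\ v_2=v$ is not available. The remedy is to decompose $z$ into finitely many pieces aligned with positive-orthant directions, or to exploit the fact that translations of $p$ by vectors in $i\R^n_{\geq 0}$ preserve stability in order to push $\Im z$ into the positive orthant and then pass to a limit. Triangle-inequality losses incurred in such a decomposition are responsible for the $\sqrt{2}$ factor in the linear term of the stated bound, and carefully tracking the constants through this step is the most delicate part of the proof.
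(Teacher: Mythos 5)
Your reduction to Theorem \ref{bisz} via a two-variable slice along non-negative directions is the right strategy, and your ``favorable case'' (both $\Re z\geq 0$ and $\Im z\geq 0$ componentwise) is carried out correctly, indeed with a slightly better linear term. But the general case is where the content of the theorem lies, and your proposal leaves it unresolved: the two remedies you sketch do not work as described. Writing $x=\Re z$ with mixed signs as $x_+-x_-$ and decomposing along positive-orthant directions generically forces at least three non-negative directions (e.g.\ $x_+$, $x_-$, $y$), so the pullback is a stable polynomial in three slice variables, and neither Theorem \ref{bisz} nor the determinantal representation behind it (Theorem \ref{detrep}) is available beyond two variables; nor is there any subadditivity of $\log|p|$ under splitting the argument of $p$ that would let you add up ``pieces'' of $z$ with mere triangle-inequality losses. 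Translating by vectors in $i\R^n_{\geq 0}$ does not address the real obstruction either: after the sign-flip reduction one may already take $\Im z>0$, the problem is a mixed-sign $\Re z$, which imaginary translations cannot fix, and such translations also change $p(0)$, $\nabla p(0)$, $Hp(0)$, whereas the bound must be stated in terms of the original data at $0$.

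The paper's device for the general case, which is the idea missing from your proposal, is: (i) Lemma \ref{Polya2} reduces to $z=x+iy$ with $y>0$; (ii) Lemma \ref{Polya3} shows $|p(x+iy)|$ only increases when $y$ is increased, so one may replace $y$ by $\tilde y$ with $\tilde y_j=\max(|x_j|,y_j)$; (iii) now \emph{both} $\tilde y+x$ and $\tilde y-x$ are non-negative vectors, so $q(w_1,w_2)=p(w_1(\tilde y+x)+w_2(\tilde y-x))$ is a stable two-variable polynomial with $q(0)=1$ and $p(x+i\tilde y)=q\left(\tfrac{1+i}{2},\tfrac{-1+i}{2}\right)$, a point with $\|\cdot\|_\infty=\tfrac{1}{\sqrt 2}$. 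Applying Theorem \ref{bisz} there and using $|x+i\tilde y|\leq \sqrt 2\,|z|$ and $|\tilde y|\leq |z|$ gives exactly the stated bound; the $\sqrt 2$ in the linear term comes from enlarging $y$ to $\tilde y$, not from a triangle-inequality decomposition. Until your argument supplies a mechanism of this kind --- two non-negative directions sufficing for an arbitrary sign pattern of $\Re z$ --- it proves the theorem only on the special set of $z$ you describe, so there is a genuine gap.
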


We also get an inequality more comparable to Theorem \ref{BBthm}:
\[
|p(z)| \leq \sqrt{e}\exp(C \|z\|_{\infty}^2)
\]
\[
C = 2(\sum_{j=1}^{n} |a(e_j)|)^2
+2 \sum_{j,k=1}^{n} |\Re [a(e_j+e_k)]|).
\]

To close the introduction we discuss what happens when
$p(0)=0$.  In one variable there is only a minor issue because we can 
factor $p(z) = z^{k}q(z)= p_kz^k + p_{k+1}z^{k+1} +
p_{k+2}z^{k+2}+\dots$ and get a bound
depending on $p_k\ne 0 ,p_{k+1},p_{k+2}$:
\[
|p(z)| \leq |p_k| \exp \left[k (|z|-1) + \Re\left(\frac{p_{k+1}}{p_{k}}z\right) + 
\frac{1}{2}|z|^2\left( \left|\frac{p_{k+1}}{p_{k}}\right|^2 -
  2\Re\left(\frac{p_{k+2}}{p_k} \right) \right) \right]
\]
for stable $p \in \C[z]$ with a zero of order $k$ at $0$.
We used the inequality $\log |z|^k \leq k(|z|-1)$.  

In several variables the case $p(0)=0$ is a little more delicate.
Borcea-Br\"and\'en covered this case as follows.  
Given $p(z) =\sum_{\alpha} a(\alpha)z^{\alpha} \in \C[z_1,\dots,z_n]$
let  $\text{supp}(p) = \{\alpha \in \N^n: a(\alpha) \ne 0\}$ and let
$\mathcal{M}(p)$ denote the set of minimal elements of $\text{supp}(p)$
with respect to the partial order $\leq$ on $\N^n$.  Also, for fixed
$\mathcal{M} \subset \N^n$ let
\[
\mathcal{M}_2 = \{\alpha+ \beta : \alpha\in \mathcal{M}, \beta \in
\N^n, |\beta| \leq 2\}
\]

\begin{theorem}[Borcea-Br\"and\'en Theorem 6.6
  \cite{BB}] \label{BBthm2}
Let $\mathcal{M} \subset \N^n$ be a finite nonempty set and
$p(z) = \sum_{\alpha} a(\alpha) z^{\alpha} \in \C[z_1,\dots, z_n]$ be
stable
with $\mathcal{M}(p)= \mathcal{M}$. Then, there are constants $B$ and
$C$ depending only on the coefficients $a(\alpha)$ with $\alpha \in
\mathcal{M}_2$ such that 
\[
|p(z)| \leq B\exp(C\|z\|_{\infty}^2).
\]
Moreover, $B$ and $C$ can be chosen 
so that they depend continuously on the aforementioned set of
coefficients.
\end{theorem}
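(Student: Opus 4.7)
The plan is to reduce the statement to Theorem \ref{msz} by considering $\partial^{\alpha_0} p$ for each $\alpha_0 \in \mathcal{M}$. Partial differentiation with respect to a single coordinate preserves stability: fixing $z_1,\ldots,\hat z_j,\ldots,z_n\in\C_+$ leaves a one-variable stable polynomial in $z_j$, whose derivative is again stable by Gauss--Lucas, so $\partial_j p$ is stable; iterating gives stability of $\partial^{\alpha_0} p$. Since $\alpha_0\in\mathcal{M}\subset\text{supp}(p)$, moreover $\partial^{\alpha_0} p(0)=\alpha_0!\,a(\alpha_0)\neq 0$, so I would apply Theorem \ref{msz} to $\partial^{\alpha_0} p/(\alpha_0!\,a(\alpha_0))$. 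Its gradient and Hessian at $0$ are rational expressions in $a(\alpha_0), a(\alpha_0+e_j), a(\alpha_0+e_j+e_k)$, all of which lie in $\mathcal{M}_2$, yielding
\[
|\partial^{\alpha_0} p(z)|\le B_{\alpha_0}\exp(C_{\alpha_0}|z|^2)
\]
with $B_{\alpha_0}, C_{\alpha_0}$ depending continuously on the $\mathcal{M}_2$-coefficients.

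Next I would convert these function bounds into bounds on Taylor coefficients via Cauchy's inequalities on polydiscs $\{|z_j|=r\}$ (on which $|z|^2\le nr^2$), optimizing $r^2=|\gamma|/(2C_{\alpha_0}n)$. Rewriting the Taylor coefficients of $\partial^{\alpha_0} p$ in terms of $a(\beta)$ via the chain rule produces, for every $\beta\ge\alpha_0$, an estimate of the form
\[
|a(\beta)|\le K_{\alpha_0}\bigl(K'_{\alpha_0}/(|\beta|-|\alpha_0|)\bigr)^{(|\beta|-|\alpha_0|)/2}
\]
with constants $K_{\alpha_0}, K'_{\alpha_0}$ depending continuously only on the $\mathcal{M}_2$-coefficients. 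Since every $\beta\in\text{supp}(p)$ dominates some $\alpha_0\in\mathcal{M}$, every $|a(\beta)|$ is controlled by $\mathcal{M}_2$-data, and continuity in those data is preserved.

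Finally, I would reassemble $p$ via the triangle inequality
\[
|p(z)|\le \sum_{\beta\in\text{supp}(p)}|a(\beta)|\,\|z\|_\infty^{|\beta|},
\]
and use the super-exponential decay of $|a(\beta)|$ together with a Stirling plus Cauchy--Schwarz argument (e.g.\ pairing each term with weights $2^k$ against $\sum_k u^{2k}/k!=e^{u^2}$) to dominate this sum by $B\exp(C\|z\|_\infty^2)$ with constants $B, C$ continuously depending on the $\mathcal{M}_2$-coefficients. This is essentially the ``bound all coefficients and reassemble'' strategy attributed to Borcea--Br\"and\'en in the introduction, now driven directly by the improved Theorem \ref{msz} rather than by Sz\'asz's original one-variable bound.

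The main obstacle will be the final summation step: verifying that the coefficient decay is strong enough for the series to be genuinely majorized by an expression of the form $B\exp(C\|z\|_\infty^2)$ rather than something larger (naively the half-integral powers of factorials that appear from taking Cauchy bounds look threatening, but the $2^k$-splitting trick tames them). Continuous dependence through the earlier steps is automatic, since differentiation, Cauchy's estimate with explicit optimized radius, and the summation are all continuous operations in the coefficients, so continuity of $B, C$ reduces to continuity of the optimized radii and of the supremum of a finite family of expressions.
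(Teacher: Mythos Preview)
The paper does not prove Theorem~\ref{BBthm2}; it is quoted from \cite{BB} and the paper instead establishes the more explicit variants Theorems~\ref{bisz2} and~\ref{msz2}. So there is no in-paper proof to compare against directly.

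Your proposal is essentially the original Borcea--Br\"and\'en strategy that the introduction describes (``bound all of the coefficients $\{a(\beta)\}$ and then reassemble $p$''), with Theorem~\ref{msz} plugged in instead of the one-variable estimate~\eqref{BBsz}. The steps are sound: differentiation preserves stability, $\partial^{\alpha_0}p(0)=\alpha_0!\,a(\alpha_0)\ne 0$ for $\alpha_0\in\mathcal{M}$, and Cauchy's inequality on the polydisc of radius $r$ with $r^2\sim|\gamma|$ gives $(|\gamma|!)^{-1/2}$-type coefficient decay, which is exactly what is needed to reassemble into $B\exp(C\|z\|_\infty^2)$. The polynomially many multi-indices at each level $|\gamma|=k$ are absorbed by any slight enlargement of $C$, so the summation step you flag as the ``main obstacle'' does go through.

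This is genuinely different from the paper's own route to the $p(0)=0$ case. Theorems~\ref{bisz2} and~\ref{msz2} never bound individual coefficients: they use the two-variable determinantal representation to estimate $\log|p|$ directly, then collapse $n$ variables to two via $q(w_1,w_2)=p(w_1(\tilde{y}+x)+w_2(\tilde{y}-x))$. The resulting constants are expressed through the homogeneous parts $P_r,P_{r+1},P_{r+2}$ evaluated at $\vec{1}$ rather than through individual $\mathcal{M}_2$-coefficients (though these are determined by the $\mathcal{M}_2$-data, so the two formulations are compatible). The determinantal argument is more geometric and yields sharper explicit constants; your coefficient-bounding argument is closer to \cite{BB} and makes the continuous dependence on the $\mathcal{M}_2$-data more transparent.
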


With our approach we are able to 
to get a more explicit estimate in two and several variables.
Set $\vec{1} = (1,\dots, 1) \in \C^n$.

\begin{theorem} \label{bisz2}
Let $p\in \C[z_1,z_2]$ be stable and assume $p$
vanishes to order $r$ at $0$.  Write out
the homogeneous expansion of $p$:
\[
p(z) = \sum_{j=r}^{d} P_j(z)
\]
where $P_j$ is homogeneous of degree $j$.  Then,
\[
|p(z)| \leq |P_r(\vec{1})|e^{-r/2}\exp\left[\Re(\sum_{j=1}^{2} c_j z_j)
+B \|z\|_{\infty}^2 \right]
\]
where
\[
c_j = \frac{1}{P_{r}(\vec{1})} \left[ \frac{\partial P_r}{\partial
      z_j}(\vec{1})\left(1-\frac{P_{r+1}(\vec{1})}{P_r(\vec{1})}\right)
 + \frac{\partial P_{r+1}}{\partial z_j}(\vec{1})\right]
\]
\[
B = \frac{1}{2}\left(\left|\frac{P_{r+1}(\vec{1})}{P_{r}(\vec{1})}\right|^2
  -2\Re\left(\frac{P_{r+2}(\vec{1})}{P_r(\vec{1})}\right) + r\right).
\]
\end{theorem}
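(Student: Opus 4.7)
The plan is to mimic the one-variable argument sketched in the introduction. Recall that in one variable we factored $p(z)=z^{k}q(z)$ with $q(0)=p_{k}\neq 0$, applied Theorem~\ref{szimprove} to $q/p_{k}$, multiplied back by $|z|^k$, and absorbed it via $\log|z|^{k}\leq k(|z|-1)\leq (k/2)(|z|^{2}-1)$. The main obstacle in two variables is that $p$ does not admit an analogous polynomial factorization through its lowest homogeneous part $P_r$; I would replace the factorization by a translation along the diagonal $\vec{1}$ together with a rescaling, then optimize the scale parameter.

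\emph{Step 1 (translate and normalize).} For a small parameter $\lambda>0$, set
$q_{\lambda}(z):=p(z+\lambda\vec{1})/p(\lambda\vec{1})$. Translation by a real vector preserves stability, and since $P_r(\vec{1})\neq 0$ the denominator
$p(\lambda\vec{1})=\lambda^{r}P_{r}(\vec{1})+\lambda^{r+1}P_{r+1}(\vec{1})+\cdots$ is nonzero for all small $\lambda>0$. Hence $q_{\lambda}$ is stable with $q_{\lambda}(0)=1$. Theorem~\ref{bisz} applied to $q_{\lambda}$ and rewritten in terms of $p$ gives, for every $\zeta\in\C^{2}$,
\[
|p(\zeta)|\leq |p(\lambda\vec{1})|\exp\!\Bigl(\Re\sum_{j=1}^{2}(\zeta_{j}-\lambda)\alpha_{j}(\lambda)+\tfrac12\|\zeta-\lambda\vec{1}\|_{\infty}^{2}M(\lambda)\Bigr),
\]
where $\alpha_{j}(\lambda)=p_{j}(\lambda\vec{1})/p(\lambda\vec{1})$ and $M(\lambda)=|\sum_{j}\alpha_{j}(\lambda)|^{2}-\Re\sum_{j,k}p_{jk}(\lambda\vec{1})/p(\lambda\vec{1})$.

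\emph{Step 2 (asymptotic expansion).} Expand every coefficient in powers of $\lambda$ using the homogeneity $P_{k}(\lambda\vec{1})=\lambda^{k}P_{k}(\vec{1})$ and the Euler-type identities $\sum_{j}\partial_{j}P_{k}(\vec{1})=kP_{k}(\vec{1})$ and $\sum_{j,k}\partial_{jk}P_{k}(\vec{1})=k(k-1)P_{k}(\vec{1})$. A direct calculation yields
\[
\alpha_{j}(\lambda)=\frac{\partial_{j}P_{r}(\vec{1})}{\lambda P_{r}(\vec{1})}+\Bigl(c_{j}-\frac{\partial_{j}P_{r}(\vec{1})}{P_{r}(\vec{1})}\Bigr)+O(\lambda),
\]
\[
M(\lambda)=\frac{r}{\lambda^{2}}+\Bigl|\frac{P_{r+1}(\vec{1})}{P_{r}(\vec{1})}\Bigr|^{2}-2\Re\frac{P_{r+2}(\vec{1})}{P_{r}(\vec{1})}+O(\lambda),
\]
where $c_{j}$ is precisely the expression in the statement. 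The singular parts $\lambda^{-1}$ and $\lambda^{-2}$ reflect the vanishing of order $r$ at the origin; the $r/\lambda^{2}$ term in $M(\lambda)$ comes from the calculation $\sum_{j}\alpha_{j}(\lambda)\cdot\lambda\to r$ squared minus $\sum_{j,k}\partial_{jk}P_{r}(\vec{1})/P_{r}(\vec{1})=r(r-1)$.

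\emph{Step 3 (combine and absorb the $\lambda^{r}$).} Substituting these expansions into the inequality of Step~1 and using $|p(\lambda\vec{1})|\sim \lambda^{r}|P_{r}(\vec{1})|$, the $\lambda^{-1}$ singularity in $\alpha_{j}(\lambda)$ pairs with the explicit $-\lambda\alpha_{j}(\lambda)$ shift to leave the finite term $-\partial_{j}P_{r}(\vec{1})/P_{r}(\vec{1})$, whose sum over $j$ gives $-r$ and, together with $c_{j}-\partial_{j}P_{r}(\vec{1})/P_{r}(\vec{1})$, reconstructs $c_{j}$; a parallel cancellation in $M(\lambda)$ isolates $M_{0}=|P_{r+1}/P_{r}|^{2}-2\Re(P_{r+2}/P_{r})$. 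Finally the residual factor $\lambda^{r}$ is converted to the prefactor $|P_{r}(\vec{1})|e^{-r/2}$ and the ``$+r$'' summand in $B$ via the convexity bound $r(|\lambda|-1)\leq (r/2)(|\lambda|^{2}-1)$, exactly as in the one-variable case; this is where the optimal choice of $\lambda$ (heuristically $\lambda\asymp\|\zeta\|_{\infty}$) enters.

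\emph{Main obstacle.} The delicate step is the bookkeeping in Step~3: one must track the poles at orders $\lambda^{-2},\lambda^{-1},\lambda^{0}$ of $\alpha_{j}(\lambda)$ and $M(\lambda)$, verify that they cancel against the $\lambda^{r}$ prefactor and the $-\lambda\vec{1}$ shift in $\|\zeta-\lambda\vec{1}\|_{\infty}^{2}$, and carry out the optimization in $\lambda$ cleanly enough that the final bound depends only on $P_{r},P_{r+1},P_{r+2}$ evaluated at $\vec{1}$. That these are the only homogeneous components appearing is a reflection of the 2-jet data used by Theorem~\ref{bisz}, and the Euler identities are essential to make the cancellations exact.
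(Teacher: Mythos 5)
Your Steps 1 and 2 are sound (translation by $\lambda\vec{1}$ preserves stability, and the expansions of $\alpha_j(\lambda)$ and $M(\lambda)$ are correct), but the argument collapses at Step 3, and not merely for bookkeeping reasons. After substituting the expansions, the exponent of your Step 1 bound contains $\frac{1}{\lambda}\Re\bigl(\sum_j\zeta_j\,\partial_jP_r(\vec{1})/P_r(\vec{1})\bigr)$ and $\frac{r}{2\lambda^2}\|\zeta-\lambda\vec{1}\|_\infty^2$, while the only compensating divergence is $r\log\lambda$ from $\log|p(\lambda\vec{1})|$. For fixed $\zeta\neq 0$ the term $r\|\zeta\|_\infty^2/(2\lambda^2)$ dominates $r\log\lambda$, so the right-hand side tends to $+\infty$ as $\lambda\to 0$: you cannot pass to the limit, and the optimization forces $\lambda\asymp\|\zeta\|_\infty$, which is not small when $\zeta$ is not. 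In that regime the truncations of Step 2 are invalid: the $O(\lambda)$ remainders, and $\log|p(\lambda\vec{1})|$ itself (which for large $\lambda$ behaves like $d\log\lambda$), involve all higher homogeneous parts of $p$, so the resulting constants do not depend only on $P_r,P_{r+1},P_{r+2},\nabla P_r,\nabla P_{r+1}$ at $\vec{1}$. Moreover the claimed cancellation ``reconstructing $c_j$'' conflates a constant with a coefficient: the shift $-\lambda\sum_j\alpha_j(\lambda)$ removes the pole only from the $\zeta$-independent part (contributing $-r+O(\lambda)$), whereas the coefficient of $\zeta_j$ remains $\partial_jP_r(\vec{1})/(\lambda P_r(\vec{1}))+\bigl(c_j-\partial_jP_r(\vec{1})/P_r(\vec{1})\bigr)+O(\lambda)$, and nothing in your argument turns this into $c_j$.

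In fact no bookkeeping can, because the inequality with the stated $c_j$ fails on simple examples: for $p(z)=z_1$ (stable, $r=1$, $c=(1,0)$, $B=1/2$) at $z=(-1,0)$ it asserts $1\le e^{-1}$, and for $p=z_1z_2$ at $(-1,-1)$ it asserts $1\le e^{-2}$. The finite part your expansion produces, $c_j-\partial_jP_r(\vec{1})/P_r(\vec{1})=\frac{1}{P_r(\vec{1})}\bigl[\partial_jP_{r+1}(\vec{1})-\frac{P_{r+1}(\vec{1})}{P_r(\vec{1})}\partial_jP_r(\vec{1})\bigr]$, is the natural linear coefficient. The paper's proof is entirely different from yours: it uses the determinantal representation (Theorem \ref{detrep}), reduces $A$ to $\begin{pmatrix} 0&0\\ 0&C\end{pmatrix}$ via Lemma \ref{ImA}, and applies the trace Lemmas \ref{PM}, \ref{sumB}, \ref{Im}; at the step where $\tr\bigl(X(z)^*X(z)-I\bigr)$ is expanded with $X(z)=J+\sum_jz_jX_j$, the cross terms give $2\Re\tr\bigl(J\sum_jz_jX_j\bigr)$, i.e.\ the coefficient $\tr(JX_j)$ — which is exactly your finite part — rather than $\tr(X_j)$ as written there, and that substitution is where the extra $\partial_jP_r(\vec{1})/P_r(\vec{1})$ enters the stated $c_j$. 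So if you pursue the translation idea, aim at the corrected coefficient; even then, the small-$\lambda$ expansion plus optimization cannot control all $\zeta$ with constants depending only on the low-order data, and some exact mechanism (the determinantal one, or a genuine factorization-type device) is needed. Finally, you use $P_r(\vec{1})\neq 0$ without justification; it is true (by Hurwitz, $P_r=\lim_{t\searrow 0}t^{-r}p(tz)$ is stable and homogeneous, so $P_r(\vec{1})=0$ would force $P_r(i,i)=0$), but it deserves a line.
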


This is proved in Section \ref{sec:bisz2}.  

Finally, we present a multivariable Sz\'asz inequality for 
the case $p(0)=0$.

\begin{theorem} \label{msz2}
Suppose $p\in \C[z_1,\dots,z_n]$ is stable and vanishes to order $r$ 
at $0$.  If we write out the homogeneous expansion of $p$
\[
p(z) = \sum_{j=r}^{d} P_j(z)
\]
then 
\[
|p(z)| \leq \|z\|_{\infty}^{r} |P_r(\vec{1})| \exp(C_0+ C_1
\|z\|_{\infty} + C_2 \|z\|_{\infty}^2)
\]
where $C_0, C_1,C_2$ are constants depending on $r, P_r(\vec{1}),
P_{r+1}(\vec{1}), P_{r+2}(\vec{1}), \nabla P_{r}(\vec{1}), \nabla
P_{r+1}(\vec{1})$.
\end{theorem}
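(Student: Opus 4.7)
The plan is to parallel the proof of Theorem \ref{bisz2} but in $n$ variables: shift $p$ by a real multiple of $\vec{1}$ to reduce to the $p(0)\ne 0$ case handled by Theorem \ref{msz}, and separately bound the resulting prefactor $|p(t\vec{1})|$ using the one-variable Sz\'asz-type inequality for stable polynomials with a zero at the origin (stated in the introduction).

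First I would verify that $P_r(\vec{1})\ne 0$. The scaling limit $P_r(z)=\lim_{\lambda\to 0^+}\lambda^{-r}p(\lambda z)$ is a limit of stable polynomials of bounded degree and hence is itself stable. Since $P_r$ is homogeneous of degree $r$, $P_r(i\vec{1})=i^r P_r(\vec{1})$; non-vanishing of $P_r$ at $i\vec{1}\in(\C_+)^n$ then forces $P_r(\vec{1})\ne 0$. Consequently the one-variable polynomial $t\mapsto p(t\vec{1})$ is stable with a zero of order exactly $r$ at $t=0$ and leading coefficient $P_r(\vec{1})$.

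For $t>0$ small, set $p_t(z):=p(z+t\vec{1})$. This remains stable (translation by the real vector $t\vec{1}$ preserves stability) and has $p_t(0)=p(t\vec{1})\ne 0$. Applying Theorem \ref{msz} to $q_t:=p_t/p_t(0)$ and substituting $w=z-t\vec{1}$ yields
\[
|p(z)| \le |p(t\vec{1})|\,\exp\!\Bigl(\sqrt{2}\,|\nabla q_t(0)|\,|z-t\vec{1}| + \bigl(|\nabla q_t(0)|^2+\|\Re Hq_t(0)\|\bigr)|z-t\vec{1}|^2\Bigr).
\]
The prefactor $|p(t\vec{1})|$ is bounded by the one-variable Sz\'asz-type inequality in terms of $r, P_r(\vec{1}), P_{r+1}(\vec{1}), P_{r+2}(\vec{1})$ and $|t|$. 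Differentiating the homogeneous expansion gives $\partial_j p(t\vec{1})=\sum_{l\ge r}t^{l-1}\partial_j P_l(\vec{1})$ and $\partial_j\partial_k p(t\vec{1})=\sum_{l\ge r}t^{l-2}\partial_j\partial_k P_l(\vec{1})$, so $|\nabla q_t(0)|=O(1/t)$ and $\|\Re Hq_t(0)\|=O(1/t^2)$, with leading-order coefficients in the low-order data.

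Choosing $t=\|z\|_\infty$ (and handling $z=0$ separately) combines these estimates to produce the claimed form $\|z\|_\infty^r|P_r(\vec{1})|\exp(C_0+C_1\|z\|_\infty+C_2\|z\|_\infty^2)$. The main obstacle is the dependence of $\|\Re Hq_t(0)\|$ on the individual second partial derivatives $\partial_j\partial_k P_r(\vec{1})$, which are not among the listed data. I expect to resolve this by exploiting the Euler identities $\sum_j z_j\partial_j P_l=l P_l$ and $\sum_{j,k}z_j z_k\partial_j\partial_k P_l=l(l-1)P_l$, which at $z=\vec{1}$ pin down the trace of $HP_r(\vec{1})$ in terms of $r$ and $P_r(\vec{1})$. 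If necessary, I would instead mimic how Theorem \ref{msz} was derived from Theorem \ref{bisz} and apply Theorem \ref{bisz2} directly to two-variable slices $(s,\tau)\mapsto p(su+\tau v)$ of $p$, whose second-order coefficient involves only sums of partial derivatives that Euler's identities handle cleanly.
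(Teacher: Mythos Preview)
Your primary route has a genuine obstacle that Euler's identities will not remove. The bound from Theorem~\ref{msz} involves the operator norm $\|\Re Hq_t(0)\|$; at leading order in small $t$ this is $t^{-2}$ times the operator norm of $\Re\bigl(HP_r(\vec{1})/P_r(\vec{1})\bigr)$. The identity $\sum_{j,k}\partial_j\partial_k P_r(\vec{1})=r(r-1)P_r(\vec{1})$ pins down only the \emph{sum of all entries} of $HP_r(\vec{1})$ (not its trace, and in any case not its operator norm), and a symmetric matrix with prescribed entry-sum can have arbitrarily large norm. There is a second difficulty with the choice $t=\|z\|_\infty$: for $t$ not small, $\nabla p(t\vec{1})/p(t\vec{1})$ and $Hp(t\vec{1})/p(t\vec{1})$ involve \emph{all} homogeneous components $P_l$, so the resulting constants would depend on all of $p$ rather than only on the listed low-order data; moreover $p(t\vec{1})$ may vanish for some $t>0$, since $t\vec{1}\in\R^n$ lies only on the boundary of $\C_+^n$.

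Your fallback plan is exactly what the paper does and is the correct fix. After reducing to $z=x+iy\in\C_+^n$ via Lemma~\ref{Polya2}, one replaces $y$ by $\tilde y=m\vec{1}$ with $m=\max_j\{|x_j|,y_j\}$ using Lemma~\ref{Polya3}, and applies Theorem~\ref{bisz2} to the two-variable slice $q(w_1,w_2)=p\bigl(w_1(\tilde y+x)+w_2(\tilde y-x)\bigr)$ at the point $\bigl(\tfrac{i+1}{2},\tfrac{i-1}{2}\bigr)$. Because $\tilde y$ is a scalar multiple of $\vec{1}$, the two-variable data collapse cleanly: $Q_j(\vec{1})=(2m)^jP_j(\vec{1})$ and each $\partial_{w_k}Q_j(\vec{1})$ is a linear combination of the entries of $\nabla P_j(\vec{1})$. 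The essential difference from your primary route is that Theorem~\ref{bisz2} requires only the scalars $Q_j(\vec{1})$ for $j=r,r+1,r+2$ and $\partial_{w_k}Q_j(\vec{1})$ for $j=r,r+1$; no Hessian norm ever appears, so the reduction to the listed data goes through without any appeal to Euler identities.
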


%Here $\|z\|_{\infty} = \max\{|z_j|: 1\leq j\leq n\}$.  
The constants $C_0,C_1,C_2$ along with the proof of this theorem are
explicitly given in Section \ref{sec:msz2}.

\section{Improved one variable inequality}\label{sec:szimprove}
In this section we prove Theorem \ref{szimprove}.

\begin{lemma}\label{squareslemma}
Suppose $\al_1,\dots, \al_d\in \C$ with $\Im \al_j \leq 0$.  Then,
\[
\sum_{j=1}^{d} |\al_j|^2 \leq |\sum_{j=1}^{d} \al_j|^2 - 2 \Re \sum_{j<k} \al_j \al_k.
\]
Equality holds if and only if either $d=1$ or $\al_j \in \R$ for all $j$.
\end{lemma}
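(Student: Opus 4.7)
The plan is to expand the right-hand side, cancel the common $\sum |\alpha_j|^2$ term, and reduce the inequality to a manifestly non-negative quantity built from the imaginary parts of the $\alpha_j$'s alone.

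First I would use the identity
\[
\left|\sum_{j=1}^{d}\alpha_j\right|^2 = \sum_{j=1}^{d}|\alpha_j|^2 + 2\Re\sum_{j<k}\alpha_j\overline{\alpha_k},
\]
so that after subtracting $\sum_j |\alpha_j|^2$ from both sides, the asserted inequality becomes
\[
0 \leq 2\Re\sum_{j<k}\alpha_j\bigl(\overline{\alpha_k}-\alpha_k\bigr).
\]
Then, since $\overline{\alpha_k}-\alpha_k = -2i\,\Im\alpha_k$ and $\Re(i\alpha_j) = -\Im\alpha_j$, a brief computation rewrites the right-hand side as
\[
4\sum_{j<k}(\Im\alpha_j)(\Im\alpha_k).
\]

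At this point the hypothesis $\Im\alpha_j\leq 0$ enters: every factor is non-positive, so each product $(\Im\alpha_j)(\Im\alpha_k)$ is non-negative, and summing gives the desired inequality. For the equality discussion, I would simply observe that equality in the displayed sum forces $(\Im\alpha_j)(\Im\alpha_k)=0$ for every pair $j<k$, i.e.\ at most one $\alpha_j$ has nonzero imaginary part; combining this with the trivial $d=1$ case gives the stated characterization.

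There is no real obstacle here — the argument is a direct algebraic identity plus one sign observation, and the sign hypothesis $\Im\alpha_j\leq 0$ is used exactly once, to sign the cross-terms. The only mildly subtle point is bookkeeping the $i$'s correctly when passing from $\overline{\alpha_k}-\alpha_k$ to $\Im\alpha_k$, but this is routine.
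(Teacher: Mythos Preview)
Your argument for the inequality is essentially identical to the paper's: both expand $|\sum\alpha_j|^2$, cancel the diagonal terms, and reduce to the non-negativity of $4\sum_{j<k}(\Im\alpha_j)(\Im\alpha_k)$.

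The only point that deserves comment is the equality case. Your reduction correctly shows that equality holds if and only if $(\Im\alpha_j)(\Im\alpha_k)=0$ for every pair $j<k$, i.e.\ at most one $\alpha_j$ has nonzero imaginary part. But this is \emph{not} the same as the stated characterization ``$d=1$ or all $\alpha_j\in\R$'': for instance, with $d=2$, $\alpha_1=-i$, $\alpha_2=1$ one checks directly that both sides equal $2$, yet $\alpha_1\notin\R$. So your final sentence, that this ``gives the stated characterization,'' is not justified --- and in fact cannot be, since the stated characterization is too strong. The paper's proof makes the same slip (asserting the sum vanishes ``exactly when either $d=1$ \dots\ or $\Im\alpha_j=0$ for all $j$''), so you are in good company; but your own analysis already contains the correct equality condition, and you should state that rather than claim it matches the lemma as written.
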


\begin{proof}
Note 
\[
|\sum_{j=1}^{d} \al_j|^2 = \sum_{j=1}^{d} |\al_j|^2 + 2 \Re\sum_{j<k} \al_j\overline{\al_k}
\]
So, our inequality reduces to showing the following is non-negative:
\[
2\Re \sum_{j<k} (\al_j \overline{\al_k} - \al_j \al_k) = 2\Re \sum_{j<k} \al_j(-2i)\Im\al_k
= 4\sum_{j<k} \Im \al_j \Im \al_k.
\]
The last quantity is evidently non-negative 
and equals zero exactly when either $d=1$ (and the sum is empty) or $\Im \al_j=0$ for
all $j$.
\end{proof}

Sz\'asz uses the inequality $|(1+z)e^{-z}|\leq e^{|z|^2}$ instead
of the stronger inequality:

\begin{lemma} \label{loglemma}
For $z \in \C$, $z\ne -1$
\[
\log|1+z|\leq \Re z + \frac{1}{2}|z|^2.
\]
\end{lemma}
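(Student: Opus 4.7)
The plan is to reduce this complex inequality to the elementary one-variable estimate $\log(1+t) \leq t$, valid for all $t > -1$, by squaring. Since $\log|1+z| = \tfrac{1}{2}\log|1+z|^2$ and
\[
|1+z|^2 = (1+z)(1+\overline{z}) = 1 + 2\Re z + |z|^2,
\]
doubling the desired inequality turns it into
\[
\log\bigl(1 + 2\Re z + |z|^2\bigr) \;\leq\; 2\Re z + |z|^2.
\]
Setting $t := 2\Re z + |z|^2$, the hypothesis $z \ne -1$ is exactly the statement that $1+t = |1+z|^2 > 0$, i.e.\ $t > -1$, so the inequality reduces to $\log(1+t) \leq t$.

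This scalar inequality is a standard fact: the function $f(t) := t - \log(1+t)$ satisfies $f'(t) = t/(1+t)$, which is negative on $(-1,0)$ and positive on $(0,\infty)$, so $f$ attains its minimum value $f(0) = 0$ on $(-1,\infty)$. No step in this reduction appears to be an obstacle; the only conceptual point, easy to miss on a first attempt, is that the apparent asymmetry between $\Re z$ and the full complex variable $z$ on the two sides of the inequality dissolves entirely once one squares the modulus, collapsing everything onto the real line. An alternative would be to view the difference $\Re z + \tfrac{1}{2}|z|^2 - \log|1+z|$ as subharmonic on $\C \setminus\{-1\}$ and appeal to a maximum-principle argument, but this would require extra care at infinity and at the singularity $z=-1$, making the squaring reduction the cleaner route.
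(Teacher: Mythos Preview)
Your proof is correct and follows essentially the same route as the paper: write $\log|1+z| = \tfrac{1}{2}\log(1+2\Re z + |z|^2)$ and apply $\log(1+t)\leq t$. The paper's argument is the same computation, presented slightly more tersely.
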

\begin{proof}
Since $\log (1+x) \leq x$ we have
\[
\begin{aligned}
\log|1+z|&=\frac{1}{2}\log|1+z|^2 \\
&= \frac{1}{2}(\log(1+2\Re z + |z|^2)\\
&\leq \frac{1}{2}(2\Re z + |z|^2).
\end{aligned}
\]
\end{proof}

\begin{proof}[Proof of Theorem \ref{szimprove}]
Write $p(z) = \prod_{j=1}^{d}(1+\al_j z)$ where $\Im \al_j \leq 0$.  
Note $\sum_{j} \al_j = p_1$ and $\sum_{j<k} \al_j \al_k = p_2$.
By Lemma \ref{loglemma} and Lemma \ref{squareslemma}
\[
\begin{aligned}
\log |p(z)| & \leq \sum_{j} (\Re(\al_j z) + \frac{1}{2}|\al_j|^2|z|^2) \\
& = \Re(p_1 z) + \frac{1}{2}(\sum_{j} |\al_j|^2) |z|^2 \\
& \leq \Re(p_1 z) + \frac{1}{2}(|p_1|^2 - 2\Re p_2)|z|^2.
\end{aligned}
\]

Regarding sharpness define $\gamma = (c_1^2-2c_2)/2 >0$.   Choose $n$
large enough that $d_n = \gamma - c_1^2/(2n)\geq 0.$  Then,
the polynomial
\[
p_n(z) = \left(1+\frac{c_1z}{n}\right)^n\left(1-\frac{d_n z^2}{n}\right)^n
\]
is stable, belongs to $\R[z]$, and has the correct normalizations.
Since $p_n(z) \to \exp{(c_1z-\gamma z^2)}$ locally uniformly, we have
\[
\lim_{n\to \infty} |p_n(iy)| = \exp{\gamma y^2}
\]
which is exactly what was claimed.
\end{proof}

It is worth pointing out that Sz\'asz proves
\[
\sum_j |\alpha_j|^2 \leq 2|\sum_j \alpha_j|^2+|\sum_j \alpha_j^2|
\]
and then converts $\sum_j \alpha_j^2 = p_1^2-2p_2$ to get
the estimate
\begin{equation}\label{BBsz}
\sum_j |\alpha_j|^2 \leq 3|p_1|^2+2|p_2|.
\end{equation}
By sidestepping the former inequality and estimating 
$\sum_j |\al_j|^2$ directly in terms of polynomial 
coefficients we get a better bound.  
The inequality \eqref{BBsz} is used in \cite{BB} to
prove multivariable Sz\'asz inequalities.  
So, using Lemma \ref{squareslemma} in their proof would already
improve Theorem \ref{BBthm}.

%Also, Sz\'asz uses
%the bound $|(1-z)e^z|\leq \exp{|z|^2}$
%which we improved to $|(1-z)e^z| \leq \exp(\frac{1}{2}|z|^2)$.

%Let $S_c = \{z\in \C: \Im z \leq -c |\Re z|\}$.
%
%\begin{theorem}
%Let $p(z) = \sum_{j=0}^{d} p_j z^j \in \C[z]$ and suppose $p$ has all of its zeros in $S_c$. 
%If $p(0)=1$ then
%\[
%|p(z)| \leq \exp(\sqrt{1+c^{-2}}(-\Im p_1)|z|).
%\]
%\end{theorem}
%
%\begin{proof}
%Write $p(z) = \prod_{j=1}^{d}(1+\al_j z)$ where $-\Im \al_j \leq c|\Re \al_j|$.  
%The last inequality implies 
%\[
%|\al_j| \leq \sqrt{1+c^{-2}}(-\Im \al_j).
%\]
%Thus,
%\[
%\log |p(z)| \leq \sum \log|1+\al_j z| \leq (\sum |\al_j|)|z| \leq \sqrt{1+c^{-2}}(-\Im p_1)|z|.
%\]
%\end{proof}

\section{Two variable Sz\'asz inequality}

Using determinantal formulas it is possible to 
establish a Sz\'asz inequality for two variable polynomials.

\begin{definition} \label{detdef}
We shall say a stable polynomial $p\in \C[z_1,\dots,z_n]$ of
total degree $d$ 
has a \emph{determinantal representation} if
there exist $d\times d$ matrices $A,B_1,\dots, B_n$ and a constant $c\in \C$ such that
\begin{enumerate}
\item $\Im A := \frac{1}{2i}(A-A^*) \geq 0$
\item for all $j$, $B_j \geq 0$
\item $\sum_{j=1}^{n} B_j=I$.
\item $p(z) = c \det(A+\sum_{j=1}^{n} z_j B_j)$.
\end{enumerate}
\end{definition}

Theorem \ref{bisz} will be broken into two theorems.

\begin{theorem} \label{detrep}
If $p\in \C[z_1,z_2]$ is stable, then $p$ has a determinantal representation.
\end{theorem}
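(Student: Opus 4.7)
The plan is to transfer the problem to the bidisk $\D^2$, where determinantal representations of stable polynomials are well-established, and then to return via the Cayley transform. Let $d_j = \deg_{z_j} p$ and define
\[
\tilde p(w_1,w_2) = (1-w_1)^{d_1}(1-w_2)^{d_2}\, p\!\left(i\tfrac{1+w_1}{1-w_1},\, i\tfrac{1+w_2}{1-w_2}\right).
\]
Since $w \mapsto i(1+w)/(1-w)$ maps $\D$ biholomorphically onto $\C_+$, the polynomial $\tilde p$ has no zeros in $\D^2$.

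Next, I would invoke a determinantal representation for polynomials nonvanishing on $\D^2$ (of the type established by Kummert and refined by Grinshpan-Kaliuzhnyi-Verbovetskyi-Vinnikov-Woerdeman, and used in the author's prior work on two variable stable polynomials). This yields
\[
\tilde p(w) = c_0 \det(I - KU), \qquad K = w_1 P_1 + w_2 P_2,
\]
for some unitary matrix $U$ and orthogonal projections $P_1, P_2$ with $P_1+P_2 = I$. Then I would reverse the Cayley transform by substituting $w_j = (z_j-i)/(z_j+i)$ and clearing the $(z_j+i)$ factors, using $P_1+P_2=I$ to consolidate terms. The result rearranges into the required form $p(z) = c\det(A + z_1 B_1 + z_2 B_2)$. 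At the operator level, the Cayley transform sends the unitary $U$ to a matrix $A$ with $\Im A \geq 0$, while the projections $P_j$ pass directly to the $B_j$, preserving $B_j \geq 0$ and $B_1+B_2 = I$. If the matrix size exceeds $d = \deg p$, one passes to a minimal realization; the identity $B_1+B_2 = I$ then pins the size to exactly $d$, since setting $z_1 = z_2 = z$ reduces $p$ to $c \det(A+zI)$, a polynomial of degree equal to the matrix size.

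The main obstacle is the careful algebraic verification after the Cayley substitution: confirming simultaneously that $\Im A \geq 0$, $B_j \geq 0$, and $B_1 + B_2 = I$ in the reassembled representation, and that the matrix size can indeed be brought down to the total degree of $p$. The substantive analytic input is the bidisk representation itself; granted that, the remainder of the argument is essentially bookkeeping through the Cayley transform.
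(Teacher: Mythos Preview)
Your overall strategy---transfer to the bidisk via the Cayley map, invoke a bidisk determinantal representation, and transform back---is exactly the paper's. But there is a real gap in the middle step. The Grinshpan--Kaliuzhnyi-Verbovetskyi--Vinnikov--Woerdeman theorem you cite furnishes a \emph{contractive} matrix $D$, not a unitary one, and a unitary of size $d_1+d_2$ need not exist. For instance, take $p(z_1,z_2)=z_1+z_2+i$ (stable, bidegree $(1,1)$, total degree $1$); then $\tilde p(w)=i(3-w_1-w_2-w_1w_2)$, and matching $c_0\det(I-KU)$ with $P_1,P_2$ the coordinate projections forces $\det U=-1/3$, impossible for a unitary. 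With the correct contractive $D$, the inverse Cayley transform $A=i(I+D)(I-D)^{-1}$ is undefined whenever $1$ is an eigenvalue of $D$, which happens precisely when $\deg p<d_1+d_2$ (as in the example above, where $D$ can be taken to be $\frac{1}{3}\begin{pmatrix}1&2\\2&1\end{pmatrix}$, with eigenvalues $1$ and $-1/3$). Your proposed remedy, ``pass to a minimal realization,'' is applied after forming $A$ and does not address this obstruction; nor is it clear such a reduction would preserve $B_j\geq 0$, $B_1+B_2=I$, and $\Im A\geq 0$ simultaneously.

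The paper's fix is the one technical idea your sketch is missing: since $D$ is a contraction, its eigenspace for eigenvalue $1$ is \emph{reducing}, so a unitary conjugation puts $D$ in the form $\begin{pmatrix}I&0\\0&K\end{pmatrix}$ with $I-K$ invertible. The trivial block contributes only a scalar factor, and on the remaining $k\times k$ block one sets $A=i(I+K)(I-K)^{-1}$ (now well defined, with $\Im A=(I-K)^{-1}(I-KK^*)(I-K^*)^{-1}\geq 0$). The $B_j$ become the corresponding \emph{compressions} of the $P_j$---not the projections themselves---but they still satisfy $B_j\geq 0$ and $B_1+B_2=I$. Only then does your degree argument apply: $p(t,t)=c_0\det(A+tI)$ has degree $k$, whence $k=\deg p$.
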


Several different determinantal representations are closely related to
this one but not quite equivalent.  There are determinantal
representations for three variable hyperbolic polynomials, two
variable real-zero
polynomials, and two variable real-stable polynomials (see
\cites{HV, VV, Grinshpan}).  It turns out this formula can
be derived from a determinantal representation for 
polynomials with no zeros on the bidisk $\D^2 = \{(z_1,z_2):
|z_1|,|z_2|<1\}$ from \cite{Grinshpan}.  
We show how to convert from the bidisk formula to Theorem \ref{detrep}
in Section \ref{sec:detrep}.
The method of conversion is a very slight modification of what
is done in the paper \cite{GK}.  We include the argument 
for the reader's convenience; the essence of Section \ref{sec:detrep}
is not new.

In Section \ref{sec:ineqfordets} we prove the following Sz\'asz inequality
for stable polynomials with determinantal representations.  

\begin{theorem} \label{ineqfordets}
Suppose $p\in \C[z_1,\dots, z_n]$ has
a determinantal representation as above.  
If $p(0)=1$, then 
\[
|p(z)|\leq\exp(\Re(\sum_{j=1}^{n} z_j p_j(0)) +
\frac{1}{2}\|z\|_{\infty}^2 (|\sum_{j=1}^{n} p_j(0)|^2-\Re(\sum_{j,k=1}^{n} p_{j,k}(0)))).
\]
\end{theorem}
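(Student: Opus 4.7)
The plan is to factor $p(z)/p(0)=\det(I+L(z))$ using the determinantal representation, with $L(z):=A^{-1}\sum_j z_jB_j$, apply Lemma~\ref{loglemma} to each factor $1+\mu_i(z)$ where the $\mu_i(z)$ are the eigenvalues of $L(z)$, and then reduce the resulting bound on $\sum_i|\mu_i(z)|^2$ to a matrix-theoretic inequality on $A^{-1}$ alone via the key observation that $C(z):=\sum_jz_jB_j$ has operator norm at most $\|z\|_\infty$.

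Writing $p(z)=\prod_i(1+\mu_i(z))$ and summing Lemma~\ref{loglemma} term by term gives
\[
\log|p(z)|\le\Re\tr L(z)+\tfrac12\sum_i|\mu_i(z)|^2.
\]
A direct differentiation of $\det(I+L(z))$ at the origin identifies $p_j(0)=\tr(A^{-1}B_j)$ and $p_{j,k}(0)=p_j(0)p_k(0)-\tr(A^{-1}B_jA^{-1}B_k)$, so $\tr L(z)=\sum_jz_jp_j(0)$, $\sum_{j,k}p_{j,k}(0)=(\tr A^{-1})^2-\tr A^{-2}$, and using $|w|^2-\Re w^2=2(\Im w)^2$ the target quadratic constant becomes
\[
|{\textstyle\sum_j}p_j(0)|^2-\Re{\textstyle\sum_{j,k}}p_{j,k}(0)=2(\Im\tr A^{-1})^2+\Re\tr A^{-2}.
\]
So the theorem reduces to the matrix inequality $\sum_i|\mu_i(z)|^2\le\|z\|_\infty^2\bigl(2(\Im\tr A^{-1})^2+\Re\tr A^{-2}\bigr)$.

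For the eigenvalue sum I would chain three estimates: Weyl's inequality $\sum_i|\mu_i(L)|^2\le\|L\|_F^2$, submultiplicativity $\|L\|_F\le\|A^{-1}\|_F\|C(z)\|_{op}$, and $\|C(z)\|_{op}\le\|z\|_\infty$. The last uses the dilation $C(z)=V^*M(z)V$ with $Vv=(B_j^{1/2}v)_j$; the condition $\sum_j B_j=I$ makes $V$ an isometry, while $M(z)=\mathrm{diag}(z_jI)$ has operator norm $\|z\|_\infty$. Together these yield $\sum_i|\mu_i(z)|^2\le\|z\|_\infty^2\|A^{-1}\|_F^2$, reducing the theorem to the purely $A$-side bound
\[
\|A^{-1}\|_F^2\le 2(\Im\tr A^{-1})^2+\Re\tr A^{-2}.
\]

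This last inequality is the main obstacle, and I would handle it via the Schur decomposition $A=UTU^*$ with $T$ upper triangular, whose diagonal entries $\lambda_i$ satisfy $\Im\lambda_i\ge 0$. Unitary invariance of the quantities involved lets us replace $A$ with $T$. Setting $\mu_i=1/\lambda_i$ (so $\Im\mu_i\le 0$), expanding $\|T^{-1}\|_F^2=\sum_i|\mu_i|^2+\sum_{i<j}|(T^{-1})_{ij}|^2$, and using the identities $\sum_i|\mu_i|^2-\Re\sum_i\mu_i^2=2\sum_i(\Im\mu_i)^2$ and $(\sum_i\Im\mu_i)^2=\sum_i(\Im\mu_i)^2+2\sum_{i<j}\Im\mu_i\Im\mu_j$, the desired inequality reduces to
\[
\sum_{i<j}|(T^{-1})_{ij}|^2\le 4\sum_{i<j}\Im\mu_i\,\Im\mu_j.
\]
This follows pairwise: the identity $\Im(M^{-1})=-M^{-1}(\Im M)(M^*)^{-1}$ gives $\Im(T^{-1})\le 0$, so every $2\times2$ principal submatrix of $-\Im(T^{-1})$ is positive semidefinite, and for the triangular $T^{-1}$ this submatrix has determinant exactly $\Im\mu_i\Im\mu_j-\tfrac14|(T^{-1})_{ij}|^2$, forcing the displayed bound term by term.
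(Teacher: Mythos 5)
Your proof is correct, and its overall architecture matches the paper's: both reduce, via the determinantal representation, to the bound $\log|p(z)|\le \Re\sum_j z_jp_j(0)+\tfrac12\|z\|_\infty^2\,\tr\bigl((A^*)^{-1}A^{-1}\bigr)$ and then to the single matrix inequality $\tr\bigl((A^*)^{-1}A^{-1}\bigr)\le 2(\Im\tr A^{-1})^2+\Re\tr A^{-2}$, which is exactly the paper's Lemma \ref{Im} applied to $A^{-1}$ (your dilation argument for $\|\sum_j z_jB_j\|\le\|z\|_\infty$ is the paper's Lemma \ref{sumB}, and your bound $\|A^{-1}C(z)\|_F\le\|A^{-1}\|_F\|C(z)\|$ plays the role of Lemma \ref{PM}). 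Where you genuinely diverge is in how the two key steps are established. For the logarithmic bound, the paper uses $\log P\le P-I$ for $P=X(z)^*X(z)>0$, whereas you factor $\det(I+L(z))$ over the eigenvalues $\mu_i(z)$, apply the scalar Lemma \ref{loglemma} to each, and invoke Schur's inequality $\sum_i|\mu_i|^2\le\|L\|_F^2$; both routes land on the same trace expression, yours at the cost of an extra (standard) eigenvalue--Frobenius estimate, the paper's at the cost of the operator inequality $\log P\le P-I$. For the matrix inequality itself, the paper splits $M=\Re M+i\Im M$ and reduces to $(\tr B)^2\ge\tr B^2$ for $B=\Im M\ge 0$, while you pass to the Schur triangular form and use nonnegativity of the $2\times2$ principal minors of $-\Im(T^{-1})$, obtaining the pairwise bound $|(T^{-1})_{ij}|^2\le 4\,\Im\mu_i\,\Im\mu_j$; this is a slightly stronger, more local statement that mirrors the one-variable Lemma \ref{squareslemma} quite literally, and it is a legitimately different (and arguably instructive) proof of the same lemma. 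Minor cosmetic points only: the inequality you call Weyl's is usually attributed to Schur (it is the $p=2$ case of Weyl's majorant theorem), and you should note in passing that the case $p(z)=0$ is trivial so that Lemma \ref{loglemma} applies to every factor $1+\mu_i(z)$.
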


Theorems \ref{detrep} and \ref{ineqfordets} combine to give Theorem \ref{bisz}.

\section{Determinantal representations} \label{sec:detrep}
In this section we prove Theorem \ref{detrep}.
We begin by recalling the following.

\begin{theorem}[See \cite{Grinshpan} Theorem 2.1] \label{bidiskdetrep} If $q\in \C[z_1,z_2]$ has no zeros in 
$\D^2$ and bidegree $(n,m)$, then there exists a constant $c$ and an $(n+m)\times(n+m)$ contractive matrix $D$ 
such that
\begin{equation} \label{qdet}
q(z) = c \det(I-D \Delta(z))
\end{equation}
where $\Delta(z) = z_1P_1+z_2P_2$ and 
\[
P_1 = \begin{pmatrix} I_n & 0 \\ 0 & O_m \end{pmatrix} \qquad 
P_2 = \begin{pmatrix} O_n & 0 \\ 0 & I_m \end{pmatrix}.
\]
\end{theorem}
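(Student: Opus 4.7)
The plan is to reduce the theorem to a transfer-function realization of a rational inner function on the bidisk. First, by replacing $q(z)$ with $q(rz)$ for some $r>1$ sufficiently close to $1$ and passing to a limit at the end using compactness of the set of $(n+m)\times(n+m)$ contractions, I may assume that $q$ has no zeros on the closed bidisk $\overline{\D^2}$. Then I form the reflection $q^*(z_1,z_2) := z_1^n z_2^m \overline{q(1/\bar{z}_1, 1/\bar{z}_2)}$ and consider the ratio $\phi := q^*/q$. Because $q$ is now nonvanishing on $\overline{\D^2}$ and $|q^*|=|q|$ on the torus $\T^2$, the function $\phi$ is a rational inner function on $\D^2$ of bidegree at most $(n,m)$.

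The heart of the argument is an application of Kummert's transfer-function realization theorem, which asserts that every rational inner function on $\D^2$ of bidegree $(n,m)$ admits a representation
\[
\phi(z) = \alpha + B\Delta(z)(I - D\Delta(z))^{-1} C
\]
in which $U := \begin{pmatrix} \alpha & B \\ C & D \end{pmatrix}$ is unitary and $D$ acts on $\C^{n+m}$ compatibly with the block decomposition defining $P_1$ and $P_2$. Clearing denominators via the adjugate identity shows that $\phi(z)\det(I - D\Delta(z))$ is itself a polynomial of bidegree at most $(n,m)$; identifying this polynomial with $q^*$ and matching bidegrees forces $q(z) = c\det(I - D\Delta(z))$ for a nonzero scalar $c$. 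Since $U$ is unitary, its principal block $D$ is automatically contractive, which is the representation asserted by the theorem.

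The main obstacle is the Kummert realization step itself: it is a nontrivial ingredient that ultimately traces back to And\^{o}'s dilation theorem for pairs of commuting contractions, combined with a positivity / reproducing-kernel argument pinning down the minimal block sizes $n$ and $m$ of the two projections inside $D$. By contrast, the reduction to strict zero-freeness, the passage $q \mapsto q^*/q$, and the final adjugate/determinant bookkeeping are all routine. A self-contained proof would therefore reproduce the Kummert--Agler machinery; since the statement is quoted from \cite{Grinshpan}, I would simply refer to Theorem 2.1 there for the realization and fold the reflection-and-adjugate argument into the exposition.
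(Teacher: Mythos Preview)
The paper does not prove this theorem at all: it is quoted verbatim as Theorem~2.1 of \cite{Grinshpan} and used as a black box in Section~\ref{sec:detrep} to derive the upper-half-plane determinantal representation (Theorem~\ref{detrep}). So there is no ``paper's own proof'' to compare against; in that sense your final sentence---defer to \cite{Grinshpan}---is exactly what the paper does.

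That said, your sketch of how such a proof would go is mostly on target but has one genuine gap. The passage from the Kummert/Agler transfer-function realization
\[
\phi(z)=\alpha+B\Delta(z)(I-D\Delta(z))^{-1}C
\]
to the determinantal formula $q(z)=c\det(I-D\Delta(z))$ is not just ``matching bidegrees.'' Writing $g(z):=\phi(z)\det(I-D\Delta(z))$, the adjugate identity indeed shows $g$ is a polynomial of bidegree at most $(n,m)$, but you have only the relation $g\cdot q=q^*\cdot\det(I-D\Delta(z))$; this does \emph{not} by itself force $g$ to be a scalar multiple of $q^*$. What is actually needed is the identity (a consequence of the unitarity of $U$, via a Schur-complement computation)
\[
g(z)=\det(U)\,\bigl[\det(I-D\Delta(z))\bigr]^{\!*},
\]
so that $q^*/q$ and $p^*/p$ with $p(z)=\det(I-D\Delta(z))$ agree up to a unimodular constant, together with an argument that $q$ and $p$---both zero-free on $\overline{\D^2}$ and of bidegree at most $(n,m)$---must then be proportional. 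Alternatively one invokes minimality of the Kummert realization to pin the bidegree of $p$ at exactly $(n,m)$. Either way this step is where the real work beyond the realization theorem lies, and it is precisely what \cite{Grinshpan} supplies; your phrase ``identifying this polynomial with $q^*$'' papers over it.
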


Let $p \in \C[z_1,z_2]$ be stable 
and have bidegree $(n,m)$.  Define $\phi(\zeta) = i\frac{1+\zeta}{1-\zeta}$ and
\[
q(z_1,z_2) = p(\phi(z_1),\phi(z_2))\left(\frac{1-z_1}{2i}\right)^n \left(\frac{1-z_2}{2i}\right)^m.
\]
One can calculate that $\phi^{-1}(\zeta) = \frac{\zeta - i}{\zeta +
  i}$ and
\[
p(z_1,z_2) = q(\phi^{-1}(z_1),\phi^{-1}(z_2)) (z_1+i)^n(z_2+i)^m.
\]
Then, $q$ has no zeros in $\D^2$ and so the conclusion
of Theorem \ref{bidiskdetrep} holds.
Then, converting \eqref{qdet} to a formula for $p$ yields
\[
\begin{aligned}
p(z) &= c\det( (z_1+i)P_1 + (z_2+i)P_2 - D((z_1-i)P_1+(z_2-i)P_2))\\
&= c \det( (I-D)\Delta(z) +i (I+D))
\end{aligned}
\]
Since $D$ is a contraction, the eigenspace corresponding to eigenvalue $1$ is reducing (if nontrivial). 
Thus, there exists a unitary $U$ such that
\[
D = U\begin{pmatrix} I & 0 \\ 0 & K \end{pmatrix} U^*
\]
where $K$ is a contractive $k\times k$ matrix for which $1$ is not an 
eigenvalue. Here $k$ is the codimension of the eigenspace of $D$ corresponding to eigenvalue $1$.
 
Then,
\[
\begin{aligned}
p(z) &= c \det\left( \begin{pmatrix} 0 & 0 \\ 0 & I-K \end{pmatrix} U^* \Delta(z) U + 
i\begin{pmatrix} 2I & 0 \\ 0 & I+K \end{pmatrix}\right)\\
&= c \det(I-K)
\det\left( \begin{pmatrix} 0 & 0 \\ 0 & I \end{pmatrix} U^*\Delta(z) U + \begin{pmatrix} 2iI & 0 \\ 0 & A \end{pmatrix}\right)
\end{aligned}
\]
where $A = i(I+K)(I-K)^{-1}$. 

Let $B_j$ equal the bottom right $k\times k$ block of $U^*P_jU$.  Then,
\[
p(z) = c\det(I-K) 
\det\begin{pmatrix} 2iI & 0 \\ * & A + \sum_j z_jB_j \end{pmatrix}
= c_0 \det(A+\sum_j z_j B_j)
\]
where $c_0$ is a new constant (the $*$ denotes a block we are
unconcerned with).  Since $P_1+P_2 = I$, 
$B_1+B_2= I$.  Also note that $p(t,t) = c_0 \det(A+tI)$ has
degree $k$ so that $k \leq \deg p$.  On the other hand, 
the determinantal formula for $p$ has total degree at most $k$, so that $\deg p \leq k$.  
Therefore the matrices in our formula have size matching
the total degree of $p$.
Finally, 
\[
\Im A %&= \frac{1}{2}(I-K)^{-1}((I+K)(I-K^*) + (I-K)(I+K^*))(I-K^*)^{-1}\\
 = (I-K)^{-1}(I-KK^*)(I-K^*)^{-1} \geq 0.
\]
This proves Theorem \ref{detrep}.

\section{Sz\'asz inequality for determinantal polynomials}\label{sec:ineqfordets}
In this section we prove Theorem \ref{ineqfordets}.

Suppose $p(z) = c \det(A+\sum_{j=1}^{n} z_j B_j)$ where
$\sum_{j=1}^{n} B_j = I$, $B_j \geq 0$, $\Im A \geq 0$, and 
$p(0)=1$.  By the last normalization $A$ is invertible
with $c\det A = 1$ so that
\[
p(z) = \det( I + \sum_{j=1}^{n} z_j X_j)
\]
where $X_j = B_j A^{-1}$.  As with complex numbers, $\Im (A^{-1}) \leq 0$.

It helps to make note of a few formulas for the 
derivatives of $p$. 
Recall that if $A(t)$ is a differentiable matrix function then
\[
\frac{d}{dt} \det A(t) = \tr( A'(t) A(t)^{-1})) \det A(t)
\]
whenever $A(t)$ is invertible. Here $\tr$ is the trace of a matrix.

Letting $X(z) = I+\sum_{j=1}^n z_j X_j$, whenever $p(z) \ne 0$
we have
\[
\begin{aligned}
p_j(z) &= \text{tr}(X_j(X(z))^{-1}) p(z)\\
p_{jk}(z) &= -\text{tr}(X_j (X(z))^{-1}X_k(X(z))^{-1})p(z) +
\text{tr}(X_j(X(z))^{-1})\text{tr}(X_k(X(z))^{-1})p(z)
\end{aligned}
\]
so that
\[
p_j(0) = \text{tr} X_j \quad p_{jk}(0)=-\text{tr}(X_jX_k) + \text{tr}(X_j)\text{tr}(X_k).
\]

For a positive definite matrix $P$ we have $\log P \leq P-I$ 
simply because the same inequality holds for the eigenvalues of $P$.
Therefore, 

\begin{align}
\log|p(z)| &= \frac{1}{2} \log \det(X(z)^*X(z)) \label{logp1} \\
&= (1/2) \tr \log X(z)^*X(z) \label{logp2} \\
& \leq (1/2) \tr (X(z)^*X(z) - I) \label{logp3} \\
& = (1/2) \tr (2 \Re(\sum_{j=1}^{n} z_j X_j)+ (\sum_{j=1}^{n} z_j X_j)^*(\sum_{k=1}^{n} z_k X_k)). \nonumber
%&\leq \Re(\sum_{j=1}^{n} p_j(0)z_j) + (1/2) \sum_{j,k=1}^{n}
%  |z_j||z_k| |\tr(X_j^* X_k)|. \nonumber
\end{align}

Now, 
\[
\begin{aligned}
\tr (\sum_{j}z_j X_j)^*(\sum_{k} z_k X_k) &= \tr [(A^*)^{-1}
(\sum_{j}z_j B_j)^*(\sum_{k} z_k B_k) A^{-1}]\\
& = \tr[ (\sum_{j}z_j
B_j)^*(\sum_{k} z_k B_k) A^{-1}(A^*)^{-1}].
\end{aligned}
\]

By Lemma \ref{PM}, Lemma \ref{sumB}, and Lemma \ref{Im} below we have
\[
\begin{aligned}
\tr (\sum_{j}z_j X_j)^*(\sum_{k} z_k X_k) &\leq \|\sum_j z_j B_j\|^{2}
\tr \left[(A^*)^{-1} A^{-1}\right] \\
&\leq \|z\|_{\infty}^2\left[ |\tr(A^{-1})|^2 - \Re(
(\tr A^{-1})^2 - \tr A^{-2})\right].
\end{aligned}
\]

% Now, $\tr(X_j^*X_k) = \tr((A^*)^{-1}B_j^*B_kA^{-1}) = \tr( B_j^*B_kA^{-1} (A^*)^{-1})$.  Since $\|B_j\|\leq 1$
% we can apply Lemma \ref{PM} below to see that
% \[
% |\tr(X_j^* X_k)| \leq \tr( (A^*)^{-1} A^{-1}).
% \]
% By Lemma \ref{Im} below, this is in turn bounded by
% \[
% |\tr(A^{-1})|^2 - \Re( (\tr A^{-1})^2 - \tr A^{-2}).
% \]

Finally, since $\sum_j B_j = I$ we have
\[
\sum_j p_j(0) = \tr A^{-1}
\]
and 
\[
\sum_{j,k} p_{jk}(0) = (\tr A^{-1})^2 - \tr A^{-2}.
\]
Thus,
\[
\log|p(z)| \leq \Re(\sum_{j=1}^{n} p_j(0)z_j) + \frac{1}{2} \|z\|_{\infty}^2 (|\sum_{j=1}^{n} p_j(0)|^2 - \Re(\sum_{j,k=1}^{n} p_{jk}(0)))
\]
which proves Theorem \ref{ineqfordets} modulo the following three lemmas.

\begin{lemma} \label{PM} Let $P, M$ be $n\times n$ matrices.
If $P \geq 0$, then
\[
|\text{tr}(MP)| \leq \|M\| \text{tr}P
\]
\end{lemma}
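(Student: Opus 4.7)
The plan is to reduce to a symmetric form and then apply Cauchy--Schwarz in the natural Hilbert space. Since $P\geq 0$ we can set $Q = P^{1/2}$, which is self-adjoint and positive, and satisfies $Q^2 = P$ and $\tr Q^2 = \tr P$. By cyclicity of the trace,
\[
\tr(MP) = \tr(MQ^2) = \tr(QMQ).
\]
This symmetric rewriting is the key step; it turns the question into estimating the trace of an operator sandwiched between two copies of $Q$, where the pointwise Cauchy--Schwarz bound lines up cleanly with the entries of $P$.

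Next, I would fix any orthonormal basis $e_1,\dots,e_n$ of $\C^n$ and expand
\[
\tr(QMQ) = \sum_{i=1}^n \ip{QMQe_i}{e_i} = \sum_{i=1}^n \ip{MQe_i}{Qe_i}.
\]
By the triangle inequality and Cauchy--Schwarz (or equivalently the definition of operator norm),
\[
|\ip{MQe_i}{Qe_i}| \leq \|M\|\,\|Qe_i\|^2 = \|M\|\,\ip{Q^2 e_i}{e_i} = \|M\|\,\ip{Pe_i}{e_i}.
\]
Summing over $i$ yields
\[
|\tr(MP)| \leq \sum_{i=1}^n \|M\|\,\ip{Pe_i}{e_i} = \|M\|\,\tr P,
\]
which is exactly the claim.

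There is no real obstacle here; the only thing to take care of is remembering that $\tr P = \tr(Q^2) = \sum_i \|Qe_i\|^2$ so the bound collapses neatly, and that $\|M\|$ is the operator norm (so that $|\ip{Mv}{w}| \leq \|M\|\|v\|\|w\|$ is available for all vectors $v,w$). Both the choice of square root $Q = P^{1/2}$ and the use of an arbitrary orthonormal basis are entirely standard, so the proof should occupy only a few lines.
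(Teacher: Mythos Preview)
Your proof is correct and is essentially the same as the paper's. The paper writes $P=\sum_j v_j v_j^*$ and bounds each $|\langle Mv_j,v_j\rangle|$ by $\|M\|\,\|v_j\|^2$; your vectors $Qe_i=P^{1/2}e_i$ are precisely such a choice of $v_j$, so the two arguments coincide up to notation.
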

\begin{proof}
Since $P\geq 0$, we can decompose $P = \sum_j v_jv_j^*$ where
$v_j \in \C^n$.  Then,
\[
|\text{tr}{MP}| \leq \sum_j |\text{tr}{Mv_jv_j^*}|
= \sum_j |\langle Mv_j, v_j \rangle|
\leq \|M\| \sum_j \|v_j\|^2 = \|M\| \text{tr}{P}.
\]
\end{proof}

The following is a standard result.

\begin{lemma} \label{sumB}
Suppose $B_1,\dots, B_n$ are $N\times N$ matrices.  
Assume for all $j$, $B_j \geq 0$ and $\sum_j B_j = I$.
Then, there exist pairwise orthogonal projection matrices $P_1,\dots, P_n$
of size $m\times m$ where $m= nN$ such that
\begin{equation} \label{bj}
 B_j = (I_N,0,\dots, 0) P_j (I_N,0,\dots, 0)^t.
\end{equation}
In particular, for $z=(z_1,\dots, z_n) \in \C^n$
\[
\|\sum_{j} z_j B_j\| \leq \|z\|_{\infty}.
\]
\end{lemma}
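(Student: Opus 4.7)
The plan is to recognize this as a Naimark-type dilation of the resolution of identity $\{B_j\}$ into a projection-valued measure (PVM) on a larger space, and then read off the operator-norm bound as a consequence of compression.

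First, I would construct the isometry that encodes the square roots of the $B_j$. Define $V:\C^N \to \C^{nN}$ by
\[
Vx = \begin{pmatrix} B_1^{1/2}x \\ B_2^{1/2}x \\ \vdots \\ B_n^{1/2}x \end{pmatrix}.
\]
Because $\sum_j B_j = I$, we have $V^*V = \sum_j B_j^{1/2} B_j^{1/2} = I$, so $V$ is an isometry. Next let $Q_j$ be the block-diagonal orthogonal projection of $\C^{nN}$ onto the $j$-th copy of $\C^N$; these are pairwise orthogonal, sum to the identity on $\C^{nN}$, and satisfy $V^*Q_jV = B_j$ by direct computation.

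The key step is to conjugate $Q_j$ into a PVM whose compression to the first block equals $B_j$. Let $E:\C^N\to\C^{nN}$ be the canonical embedding $Ex = (x,0,\dots,0)^t$, so $E = (I_N,0,\dots,0)^t$ in the notation of the statement. Since both $V$ and $E$ are isometries with $N$-dimensional ranges in $\C^{nN}$, there is a unitary $U$ on $\C^{nN}$ sending the range of $E$ onto the range of $V$ by $U E = V$ (extend arbitrarily by any unitary on the orthogonal complements). Setting $P_j := U^* Q_j U$, the family $\{P_j\}$ consists of pairwise orthogonal projections summing to $I_{nN}$, and
\[
E^* P_j E = E^* U^* Q_j U E = V^* Q_j V = B_j,
\]
which is exactly \eqref{bj}.

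For the norm inequality, I would observe that $\sum_j z_j P_j$ is a normal operator on $\C^{nN}$ whose spectral decomposition is given by the PVM $\{P_j\}$, so $\|\sum_j z_j P_j\| = \max_j |z_j| = \|z\|_\infty$. Compressing to the first block via $E$ gives
\[
\sum_j z_j B_j = E^*\Bigl(\sum_j z_j P_j\Bigr)E,
\]
and since $E$ is an isometry the operator norm can only decrease: $\|\sum_j z_j B_j\| \leq \|z\|_\infty$. There is no real obstacle here — the only point requiring a bit of care is the existence of the unitary $U$ with $UE=V$, which is immediate from the fact that any isometry between subspaces of equal dimension in a finite-dimensional Hilbert space extends to a unitary on the ambient space.
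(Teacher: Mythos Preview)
Your argument is correct and is essentially the paper's own proof: you take $A_j = B_j^{1/2}$ in the factorization $B_j = A_j^*A_j$, stack these into an isometry $V$ (the paper's $T$), extend to a unitary $U$ on $\C^{nN}$, and set $P_j = U^*Q_jU$. The only cosmetic difference is that you phrase the extension as ``choose $U$ with $UE=V$'' rather than ``extend $T$ to a unitary,'' but these are the same construction.
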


\begin{proof}
We can factor $B_j = A_j^* A_j$ with $N\times N$ matrix $A_j$.  
The $nN\times N$ matrix
\[
T = \begin{pmatrix} A_1 \\ \vdots \\ A_n\end{pmatrix}
\]
is an isometry from $\C^N$ to $\C^{nN}$ since $T^*T = \sum_j B_j = I$.
We can extend $T$ to a $m\times m$ unitary $U$.  Let
$Q_j$ be the orthogonal projection onto the $j$-th block
of $\C^{m} = \C^N\oplus \cdots \oplus \C^N$.  Set $P_j = U^*Q_j U$.
Then, \eqref{bj} holds and
\[
\|\sum_{j} z_j B_j\| \leq \|\sum_j z_j P_j \| \leq \|z\|_{\infty}.
\]
\end{proof}

The following lemma is an adaptation of our
one variable argument.

\begin{lemma} \label{Im}
If $M$ is a square matrix with $\Im M \geq 0$, then
\[
\text{tr}M^*M \leq |\text{tr}M|^2 - \Re( (\text{tr} M)^2 - \text{tr} M^2).
\]
\end{lemma}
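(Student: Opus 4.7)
The plan is to adapt the scalar argument of Lemma \ref{squareslemma} by decomposing $M$ into Hermitian pieces and reducing the inequality to a statement about eigenvalues of $\Im M$.

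First, write $M = H + iK$, where $H = \tfrac{1}{2}(M+M^*)$ and $K = \Im M = \tfrac{1}{2i}(M-M^*)$ are Hermitian, and $K \geq 0$ by hypothesis. Expanding and using cyclicity of trace,
\[
\tr M^*M = \tr H^2 + \tr K^2, \qquad \Re \tr M^2 = \tr H^2 - \tr K^2,
\]
since $\tr(HK - KH) = 0$ and $2i\tr(HK)$ is purely imaginary (as $\tr(HK)$ is real). Likewise $\tr M = \tr H + i\tr K$, so
\[
|\tr M|^2 = (\tr H)^2 + (\tr K)^2, \qquad \Re (\tr M)^2 = (\tr H)^2 - (\tr K)^2.
\]

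Assembling these pieces, the right-hand side of the claimed inequality becomes $\tr H^2 + 2(\tr K)^2 - \tr K^2$, while the left-hand side is $\tr H^2 + \tr K^2$. The $\tr H^2$ terms cancel and the entire inequality reduces to $\tr K^2 \leq (\tr K)^2$. Since $K \geq 0$ is unitarily diagonalizable with non-negative eigenvalues $k_1, \dots, k_N$, this is exactly
\[
(\tr K)^2 - \tr K^2 = \Big(\sum_j k_j\Big)^2 - \sum_j k_j^2 = 2\sum_{j<k} k_j k_k \geq 0,
\]
which is the same mechanism that drove Lemma \ref{squareslemma}: products of real numbers sharing a sign are non-negative.

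There is no genuine obstacle; the argument is essentially mechanical. The only care required is in tracking which trace quantities are real and in verifying that the $H$-contributions cancel between $\tr M^*M$ and $|\tr M|^2 - \Re((\tr M)^2 - \tr M^2)$, so that the positivity hypothesis $\Im M \geq 0$ can do its job at the final scalar step, exactly as $\Im \al_j \leq 0$ did in the one-variable case.
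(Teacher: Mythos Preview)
Your proof is correct and is essentially identical to the paper's: the paper writes $A=\Re M$, $B=\Im M$, computes the same four trace identities, and reduces the inequality to $(\tr B)^2 - \tr B^2 = \sum_{j\ne k}\beta_j\beta_k \geq 0$ for the eigenvalues $\beta_j\geq 0$ of $B$. The only differences are notational.
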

\begin{proof}
Write $A = \Re M, B = \Im M$.  Then,
\[
\begin{aligned}
|\tr M|^2 &= (\tr A)^2+(\tr B)^2,\\
\tr M^*M &= \tr(A^2+B^2 + i(AB-BA)) = \tr A^2 + \tr B^2,\\
\Re (\tr M)^2 &= (\tr A)^2 - (\tr B)^2,\\
\Re [\tr M^2] &= \Re [\tr( A^2-B^2 + i(AB+BA))] = \tr A^2-\tr B^2. 
\end{aligned}
\]
Then,
\[
|\text{tr}M|^2-\tr M^*M - \Re( (\text{tr} M)^2 - \text{tr} M^2) = 2((\tr B)^2 - \tr B^2)
\]
If $B$ has eigenvalues $\beta_j\geq 0$ then
\[
(\tr B)^2 - \tr B^2 = (\sum \beta_j)^2 - \sum \beta_j^2 
= \sum_{j\ne k} \beta_j\beta_k \geq 0.
\]
This proves the claimed inequality.
\end{proof}

\section{Sz\'asz inequality for determinants with $p(0)=0$} \label{sec:bisz2}

As with Theorem \ref{bisz} we will prove a Sz\'asz inequality
for polynomials with determinantal representations and Theorem
\ref{bisz2}
will follow via Theorem \ref{detrep}.

\begin{theorem} \label{szdetreps2}
Suppose $p\in \C[z_1,\dots,z_n]$ has a
 determinantal representation
as in Definition \ref{detdef}.
Assume $p$
vanishes to order $r$ at $0$.  Write out
the homogeneous expansion of $p$:
\[
p(z) = \sum_{j=r}^{d} P_j(z)
\]
where $P_j$ is homogeneous of degree $j$.  Then,
\[
|p(z)| \leq |P_r(\vec{1})|e^{-r/2}\exp\left[\Re(\sum_{j=1}^{n} c_j z_j)
+B \|z\|_{\infty}^2 \right]
\]
where
\[
c_j = \frac{1}{P_{r}(\vec{1})} \left[ \frac{\partial P_r}{\partial
      z_j}(\vec{1})(1-\frac{P_{r+1}(\vec{1})}{P_r(\vec{1})})
 + \frac{\partial P_{r+1}}{\partial z_j}(\vec{1})\right]
\]
\[
B = \frac{1}{2}\left(\left|\frac{P_{r+1}(\vec{1})}{P_{r}(\vec{1})}\right|^2
  -2\Re\left(\frac{P_{r+2}(\vec{1})}{P_r(\vec{1})}\right) + r\right)
\]
and $\vec{1} = (1,\dots,1) \in \C^n$.
\end{theorem}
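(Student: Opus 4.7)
The plan is to generalize the argument of Theorem~\ref{ineqfordets} from Section~\ref{sec:ineqfordets}, handling the singular matrix $A$ (forced by $p(0)=0$) via a block decomposition. Since $\Im A \geq 0$, any $v \in \ker A$ satisfies $\langle \Im A v, v \rangle = \Im \langle Av, v\rangle = 0$, so $\Im A v = 0$ and hence $A^*v = 0$. Thus $\ker A = \ker A^*$ is reducing for $A$, and after a unitary change of basis one writes $A = 0_r \oplus A_0$ with $A_0$ invertible and $\Im A_0 \geq 0$, where $r = \dim \ker A$. Compatibly block-decompose each $B_j = \begin{pmatrix} C_j & D_j^* \\ D_j & E_j \end{pmatrix} \geq 0$, yielding $\sum_j C_j = I_r$, $\sum_j E_j = I_{d-r}$, $\sum_j D_j = 0$, and $C_j, E_j \geq 0$ as principal submatrices of PSD blocks.

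A Schur-complement expansion of $p(z) = c \det(A + \sum_j z_j B_j)$ identifies the lowest-order homogeneous component as $P_r(z) = c\det A_0 \cdot \det\bigl(\sum_j z_j C_j\bigr)$, so in particular $P_r(\vec{1}) = c\det A_0$; the standing hypothesis $P_r(\vec{1}) \neq 0$ (implicit in the formulas for $c_j$ and $B$) ensures $\dim \ker A = r$ matches the stated order of vanishing.

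The exponential bound is then assembled from two pieces. For the non-singular block, the polynomial $\tilde p(z) := \det(A_0 + \sum_j z_j E_j)/\det A_0$ is stable with $\tilde p(0)=1$ and has a determinantal representation satisfying Definition~\ref{detdef}; applying Theorem~\ref{ineqfordets} to $\tilde p$ supplies the main part of the exponential. For the singular block, $\det\bigl(\sum_j z_j C_j\bigr) = P_r(z)/P_r(\vec{1})$ satisfies
\[
\bigl|\det\bigl(\textstyle\sum_j z_j C_j\bigr)\bigr| \leq \bigl\|\textstyle\sum_j z_j C_j\bigr\|^r \leq \|z\|_\infty^r
\]
by Lemma~\ref{sumB}, and the elementary inequality $\log x \leq (x^2-1)/2$ then gives
\[
\|z\|_\infty^r \leq e^{-r/2}\exp\bigl(\tfrac{r}{2}\|z\|_\infty^2\bigr),
\]
accounting at once for the $e^{-r/2}$ prefactor and the $+r$ summand inside $B$.

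The Schur-complement cross terms $Y^* W^{-1} Y$ (with $W = A_0 + \sum_j z_j E_j$ and $Y = \sum_j z_j D_j$) adjust the linear and quadratic coefficients away from the naive estimate, producing the mixed expressions involving $P_{r+1}(\vec{1})$ and $P_{r+2}(\vec{1})$. Matching these adjustments to the stated form of $c_j$ and $B$ is done by expanding the determinant and using Euler's identity $\sum_j z_j \partial_j P_k(z) = k\,P_k(z)$ to rewrite the resulting trace expressions in terms of $P_r(\vec{1})$, $P_{r+1}(\vec{1})$, $P_{r+2}(\vec{1})$ and their first-order partial derivatives. The hardest part will be precisely this bookkeeping, tracking how the off-diagonal blocks $D_j$ interact with $A_0^{-1}$ so as to reproduce exactly the stated coefficients rather than merely order-of-magnitude bounds.
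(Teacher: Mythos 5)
Your opening reduction matches the paper's: since $\Im A\geq 0$, $\ker A$ is reducing (this is the paper's Lemma \ref{ImA}), one writes $A=0_r\oplus A_0$ with $A_0$ invertible, and the lowest homogeneous term is $c\det A_0\cdot\det(\sum_j z_jC_j)$, which is nonzero at $\vec{1}$, giving $\dim\ker A=r$ and $P_r(\vec{1})=c\det A_0$ (the paper's $c_0$). After that, however, your route diverges and has a genuine gap: you propose to bound $|p|$ multiplicatively through the Schur factorization $p(z)=c\,\det W\cdot\det\bigl(\sum_j z_jC_j-Y^*W^{-1}Y\bigr)$ with $W=A_0+\sum_j z_jE_j$, $Y=\sum_j z_jD_j$, estimating $\det W/\det A_0$ by Theorem \ref{ineqfordets} and $\det(\sum_j z_jC_j)$ by $\|z\|_\infty^r$, and you defer the cross term $Y^*W^{-1}Y$ as ``bookkeeping.'' But that term is the whole difficulty, not an adjustment: without it the factorization identity is simply false, so the product of your two separate upper bounds does not bound $|p(z)|$; and with it, the natural estimate of the Schur-complement factor brings in $\|W^{-1}\|$, which is not controlled (on the real points $W$ can be nearly singular, with $\det W$ small in compensation — a compensation your separate upper bounds cannot exploit). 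The cross terms also cannot be discarded for a weaker bound of the stated shape: under the paper's identifications the coefficient $c_j$ equals $\tr\bigl(B_j\,\mathrm{diag}(I_r,A_0^{-1})\bigr)=\tr C_j+\tr(E_jA_0^{-1})$, so the singular block contributes to the \emph{linear} term and cannot be absorbed into the crude $\|z\|_\infty^r\leq e^{-r/2}e^{r\|z\|_\infty^2/2}$ estimate.

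The paper avoids splitting the determinant. It sets $X_j=B_j\,\mathrm{diag}(I_r,A_0^{-1})$, $J=\mathrm{diag}(O_r,I_{d-r})$, writes $p(z)=c_0\det(J+\sum_j z_jX_j)$, and applies the inequality $\log\det(X^*X)\leq\tr(X^*X-I)$ to the \emph{whole} matrix $X(z)=J+\sum_j z_jX_j$; the $J^*J-I=-\mathrm{diag}(I_r,0)$ block produces the $-r/2$, and Lemmas \ref{PM}, \ref{sumB}, \ref{Im} give the quadratic term with the $+r$ coming from $\tr(I_r)$. The remaining work is the identification of $c_0$, $\tr A_0^{-1}$, $(\tr A_0^{-1})^2-\tr A_0^{-2}$, and $\tr X_j$ with $P_r(\vec{1})$, $P_{r+1}(\vec{1})$, $P_{r+2}(\vec{1})$, and the gradient data, carried out by evaluating $t^{-r}p(t\vec{1})$ and $\partial_s|_{s=0}\,p(se_j+t\vec{1})$ and matching Taylor coefficients in $t$ — not by Euler's identity. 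If you want to salvage your plan, the viable fix is essentially to abandon the multiplicative split and run the unified trace argument; as it stands, the step from your setup to the stated $c_j$ and $B$ is missing.
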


\begin{proof}
Write $p(z) = c\det(A+\sum_{j=1}^{d} z_j B_j)$ as 
in Definition \ref{detdef}.  Since $p(0)=0$, $\det A = 0$.
Since $\Im A \geq 0$, the eigenspace
corresponding to eigenvalue $0$ is reducing for $A$; see
Lemma \ref{ImA} below.  Let $s$ equal the dimension of
the kernel of $A$.

So, after conjugating by a unitary we can rewrite $p$ in the form
\[
p(z) = c\det\left( \begin{pmatrix} 0 & 0 \\ 0 & C \end{pmatrix} +
\sum_{j=1}^{d} z_j B_j \right)
\]
where $C$ is an invertible $(d-s)\times(d-s)$ matrix with $\Im C\geq 0$ and
the $B_j$ are relabelled after conjugating (they satisfy all of the
same properties as before).  Define $X_j = B_j \begin{pmatrix} I & 0
  \\ 0 & C^{-1} \end{pmatrix}$ and $J = \begin{pmatrix} O_s & 0 \\ 0 &
  I_{d-s} \end{pmatrix}$.  Then,
\[
p(z) = c_0 \det (J +\sum_{j=1}^{d} z_j X_j)
\]
where $c_0 = c \det C$.  Let $X(z) = J +\sum_{j=1}^{d} z_j X_j$.  Let
$X_s(z)$ be the top left $s\times s$ block of $X(z)$. Evaluating $\det
X$
starting with the top left $s\times s$ block gives
\[
\det X(z) = \det X_s(z) + \text{ higher order terms}.
\]
Note that $\det X_s(z)$ is homogeneous of degree $s$ and 
$X_s(\vec{1}) = I_s$ since $\sum_j B_j=I$.  This proves $s=r$.

We can follow some of the argument in Section \ref{sec:ineqfordets}.
Equations \eqref{logp1}, \eqref{logp2}, \eqref{logp3} hold when
$p(z)\ne 0$ but \eqref{logp3} rearranges into
\[
\begin{aligned}
\log|p(z)/c_0| & \leq (1/2) \text{tr}\left( \begin{pmatrix} -I_r & 0 \\ 0 &
  0 \end{pmatrix} + 2\Re(\sum_{j=1}^{n} z_j X_j) + (\sum_j z_j
X_j)^*(\sum_k z_k X_k)\right)\\
& \leq -r/2 + \Re(\sum_{j=1}^{n} z_j \tr(X_j)) + (1/2)\tr (\sum_{j}z_j
X_j)^*(\sum_k z_k X_k).
\end{aligned}
\]

As before using Lemmas \ref{PM}, \ref{sumB}, \ref{Im} we have
\[
\begin{aligned}
\tr (\sum_{j}z_j
X_j)^*(\sum_k z_k X_k) &\leq  \|\sum_{j} z_j B_j\|^2 \tr( \begin{pmatrix} I_r & 0 \\ 0 &
  (C^*)^{-1}C^{-1} \end{pmatrix}) \\
&\leq \|z\|_{\infty}^2( r + |\tr(C^{-1})|^2 - \Re(
(\tr C^{-1})^2 - \tr C^{-2})).
\end{aligned}
\]

Now we must relate these quantities 
to intrinsic quantities of $p$.  

First, $p(t\vec{1}) = c_0 \det \begin{pmatrix} tI_r & 0 \\ 0 &
  I+tC^{-1} \end{pmatrix} = c_0 t^r \det(I+tC^{-1}).$ 
So, using this formula and the homogeneous expansion of $p$ we get
\[
\begin{aligned}
 t^{-r}p(t\vec{1}) \big|_{t=0} &=  c_0  = P_r(\vec{1}) \\ 
 \frac{d}{dt}\big|_{t=0} t^{-r}p(t\vec{1})  &= c_0 \tr C^{-1} = P_{r+1}(\vec{1})\\
 \frac{d^2}{dt^2}\big|_{t=0} t^{-r} p(t\vec{1}) & = c_0( (\tr C^{-1})^2 -
 \tr C^{-2}) = 2P_{r+2}(\vec{1}).
\end{aligned}
\]
It is more difficult to calculate $\tr X_j$.
Define
\[
q(s,t) = p(s e_j + t\vec{1}) = c_0 \det\left(J + s X_j + t \begin{pmatrix} I & 0 \\
  0 & C^{-1} \end{pmatrix}\right). 
\]
Note $\frac{\partial q}{\partial s}(0,t) = \frac{\partial p}{\partial z_j}(t\vec{1})$.
Then, 
\[
\begin{aligned}
\frac{\partial q}{\partial s}(0,t) &= \tr \left( X_j \begin{pmatrix}
t^{-1}I_r & 0 \\ 0 & (I+tC^{-1})^{-1} \end{pmatrix}\right)  p(t\vec{1}) \\
&= \tr\left(X_j \begin{pmatrix}
I_r & 0 \\ 0 & t(I+tC^{-1})^{-1} \end{pmatrix}\right) c_0 t^{r-1}
\det(I+tC^{-1}) 
\end{aligned}
\]
Thus, we can do the following computation with matrices and
also with the homogeneous expansion of $p$ 
\[
t^{-r+1}\frac{\partial q}{\partial s}(0,t)\Big|_{t=0} = c_0 \tr
\left(X_j \begin{pmatrix} I_r & 0 \\ 0 & 0 \end{pmatrix}\right) = \frac{\partial
  P_r}{\partial z_j}(\vec{1})
\]
Therefore,
\[
\frac{\partial q}{\partial s}(0,t) -
t^{-1}\frac{p(t\vec{1})}{P_r(\vec{1})} \frac{\partial P_r}{\partial
  z_j}(\vec{1}) =
\tr\left( X_j \begin{pmatrix} 0 & 0 \\ 0 &
  t(I+tC^{-1})^{-1} \end{pmatrix}\right)c_0 t^{r-1} \det(I+tC^{-1})
\]
which implies
\[
t^{-r}\left(\frac{\partial q}{\partial s}(0,t) -
t^{-1}\frac{p(t\vec{1})}{P_r(\vec{1})} \frac{\partial P_r}{\partial
  z_j}(\vec{1})\right)\Big|_{t=0} = \tr\left(X_j \begin{pmatrix} 0 & 0 \\ 0 &
  I\end{pmatrix}\right) c_0 =\frac{\partial P_{r+1}}{\partial z_j}(\vec{1})
- \frac{P_{r+1}(\vec{1})}{P_r(\vec{1})} \frac{\partial P_r}{\partial z_j}(\vec{1})
\]
Therefore,
\[
c_0 \tr X_j = \left(1-\frac{P_{r+1}(\vec{1})}{P_r(\vec{1})}\right)
\frac{\partial P_r}{\partial z_j}(\vec{1}) + \frac{\partial
  P_{r+1}}{\partial z_j}(\vec{1}).
\]
If we reassemble we get
\[
\log|p(z)/P_r(\vec{1})| \leq -r/2 + \Re(\sum_{j=1}^{n} c_j z_j ) +
B \|z\|_\infty^2
\]
where
\[
c_j = \frac{1}{P_r(\vec{1})} \left[\left(1-\frac{P_{r+1}(\vec{1})}{P_r(\vec{1})}\right)
\frac{\partial P_r}{\partial z_j}(\vec{1}) + \frac{\partial
  P_{r+1}}{\partial z_j}(\vec{1})\right]
\]
\[
B = \frac{1}{2}\left( r+ \left|
    \frac{P_{r+1}(\vec{1})}{P_r(\vec{1})}\right|^2
  -2\Re\left(\frac{P_{r+2}(\vec{1})}{P_r(\vec{1})}\right)\right)
\]
and this concludes the proof.
\end{proof}

\begin{lemma} \label{ImA}
Suppose $A$ is a matrix with $\Im A \geq 0$.  
If $0$ is an eigenvalue of $A$ with eigenspace of dimension $s$, 
then there exists a unitary $U$ such that
\[
U^* A U = \begin{pmatrix} O_{s} & 0 \\ 0 & C\end{pmatrix}
\]
where $\Im C \geq 0$ and $C$ is invertible.
\end{lemma}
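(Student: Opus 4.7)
The plan is to show that the kernel of $A$ is a reducing subspace — that is, invariant under both $A$ and $A^*$ — which immediately produces the block decomposition upon choosing a unitary adapted to $\ker A \oplus (\ker A)^\perp$.

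First I would show that $\ker A \subseteq \ker A^*$. If $v \in \ker A$, then $\langle Av, v\rangle = 0$, so taking imaginary parts gives $\langle (\Im A) v, v\rangle = 0$. Because $\Im A \geq 0$, its quadratic form vanishing at $v$ forces $(\Im A)v = 0$ (apply Cauchy--Schwarz to the inner product defined by $\Im A$, or take a square root). Combined with $Av = 0$, this gives $(\Re A) v = 0$ and therefore $A^* v = (\Re A - i \Im A) v = 0$. Since $\ker A$ and $\ker A^*$ have the same dimension in finite dimensions, we conclude $\ker A = \ker A^*$, so $\ker A$ is invariant under $A^*$ and hence reduces $A$.

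Now I would pick a unitary $U$ whose first $s$ columns form an orthonormal basis of $\ker A$ and whose remaining columns form an orthonormal basis of $(\ker A)^\perp$. Since both subspaces are invariant under $A$, the matrix $U^*AU$ is block diagonal of the form
\[
U^*AU = \begin{pmatrix} O_s & 0 \\ 0 & C \end{pmatrix}.
\]
Invertibility of $C$ is immediate: if $Cw = 0$ for $w \in (\ker A)^\perp$, then $w$ would lie in $\ker A \cap (\ker A)^\perp = \{0\}$. For the positivity, observe that the same block decomposition holds for $A^*$, so $\Im(U^*AU) = U^*(\Im A)U$ is itself block diagonal, with $\Im C$ occupying the lower right. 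Since $\Im A \geq 0$, so is the conjugated matrix, and therefore $\Im C \geq 0$.

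There is essentially no obstacle here; the only step that requires a moment of care is the inference $(\Im A)v = 0$ from $\langle (\Im A)v, v\rangle = 0$, which uses the standard fact that a positive semidefinite matrix vanishes on any vector where its quadratic form vanishes. Everything else is bookkeeping with the orthogonal decomposition.
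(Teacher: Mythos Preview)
Your proof is correct and follows essentially the same route as the paper: both arguments take the orthogonal decomposition $\ker A \oplus (\ker A)^\perp$ and use $\Im A \geq 0$ to see that the kernel is reducing. The paper first writes $U^*AU$ in block upper-triangular form and then observes that the imaginary part being positive semidefinite with vanishing $(1,1)$ block forces the off-diagonal block to vanish, whereas you establish $\ker A = \ker A^*$ directly at the vector level; these are two phrasings of the same implication.
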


\begin{proof}
If we write $A$ using an orthonormal basis for
its kernel followed by an orthonormal basis for the orthogonal
complement of its kernel, we can put $A$ into the form
\[
\begin{pmatrix} O_s & B \\ 0 & C\end{pmatrix}
\]
by conjugating by a unitary.  This matrix will still have positive
semi-definite
imaginary part:
\[
\begin{pmatrix} O_s & \frac{1}{2i} B \\ -\frac{1}{2i} B^* & \Im
  C\end{pmatrix} \geq 0.
\]
This implies $B=0$.
Note $C$ is invertible because it cannot have $0$ as an eigenvalue.
\end{proof}

\section{Multivariable Sz\'asz inequalities}
Using the two variable Sz\'asz inequality 
we can establish the multivariable inequality Theorem \ref{msz}.

We will frequently use the component-wise 
partial order on $\R^n$: $x \geq y$
if and only if for all $j=1,\dots, n$, $x_j\geq y_j$.

\begin{proof}[Proof of Theorem \ref{msz}]
For $z=x+iy \in \C_+^{n}$ we have
\[
|p(x_1+iy_1,\dots, x_n+iy_n)| \geq |p(x_1\pm i y_1, \dots, x_n\pm iy_n)|
\]
for all independent choices of $\pm$ by Lemma \ref{Polya2}
below (more precisely, we can hold fixed any variables with a ``$+$'' and apply Lemma
\ref{Polya2} to the remaining variables).  So, it is enough 
to prove Theorem \ref{msz} for $z \in \C_+^n$.  

By Lemma \ref{Polya3} below, if $0\leq y\leq \tilde{y}$ then
\[
|p(x+iy)| \leq |p(x+i\tilde{y})|.
\]
Define $\tilde{y}$ as the vector with $j$-th component
\[
\tilde{y}_j = \max(|x_j|, y_j).
\]
Then, $\tilde{y} \geq \pm x$ and $\tilde{y}\geq y$.

Define
\[
q(w_1,w_2) = p(w_1(\tilde{y}+x)+w_2(\tilde{y}-x))
\]
which has no zeros in $\C_+^2$ and $q(0)=1$.
We will now apply Theorem \ref{bisz} using all of the 
following computations.
\[
q\left(\frac{i+1}{2}, \frac{i-1}{2}\right) = p(x+i\tilde{y})
\]
\[
q_1(w) = \sum_j p_j(w_1(\tilde{y}+x)+w_2(\tilde{y}-x))(\tilde{y}_j+x_j)
\]
\[
q_2(w) = \sum_j p_j(w_1(\tilde{y}+x)+w_2(\tilde{y}-x))(\tilde{y}_j-x_j)
\]
\[
q_{11}(0) = \sum_{j,k} p_{jk}(0)(\tilde{y}_j+x_j)(\tilde{y}_k+x_k)
\]
\[
q_{12}(0) = \sum_{j,k} p_{jk}(0)(\tilde{y}_j+x_j)(\tilde{y}_k-x_k)
\]
\[
q_{22}(0) = \sum_{j,k} p_{jk}(0)(\tilde{y}_j-x_j)(\tilde{y}_k-x_k)
\]
\[
q_1(0)(i+1)/2 + q_2(0)(i-1)/2 = \nabla p(0)\cdot (x+i\tilde{y})
\]
\[
q_1(0)+q_2(0) = 2\nabla p(0)\cdot \tilde{y}
\]
\[
q_{11}(0)+2q_{12}(0)+q_{22}(0) = 4\sum_{j,k}p_{jk}(0)\tilde{y}_j\tilde{y}_k.
\]

Thus, by Theorem \ref{bisz}

\begin{multline}\label{worth}
\log\left|q\left(\frac{i+1}{2},\frac{i-1}{2}\right)\right|  \leq
\Re(\nabla p(0)\cdot (x+i\tilde{y})) \\%\nonumber
+\frac{1}{4}(|2\nabla p(0)\cdot \tilde{y}|^2 
-4Re(\sum_{j,k} p_{jk}(0)\tilde{y}_j\tilde{y}_k)) \\%\label{worth}
\leq 
\sqrt{2}|\nabla p(0)||z|
+(|\nabla p(0)|^2+\|\Re Hp(0)\|)|z|^2 %\nonumber
\end{multline}
where we have used $|x+i\tilde{y}| \leq \sqrt{2}|z|$ and $|\tilde{y}|\leq |z|$.  
\end{proof}

Since Theorem \ref{BBthm} is an estimate on polydisks it is worth
pointing out that \eqref{worth} yields
\[
\begin{aligned}
\log|p(z)| &\leq \|z\|_{\infty} \sqrt{2} \sum_j |p_j(0)| + \|z\|_{\infty}^2((\sum_j |p_j(0)|)^2
+ \sum |\Re [p_{jk}(0)]|) \\
& \leq \|z\|_{\infty} \sqrt{2} \sum_j |a(e_j)| + \|z\|_{\infty}^2((\sum_j |a(e_j)|)^2
+2 \sum |\Re [a(e_j+e_k)]|) \\
&\leq \frac{1}{2} + \|z\|_{\infty}^2(2(\sum_j |a(e_j)|)^2
+2 \sum |\Re [a(e_j+e_k)]|)
\end{aligned}
\]
where $p = \sum a(\beta) z^{\beta}$ and in the last line we used the
inequality $a\leq (1+a^2)/2$.
This gives
\[
|p(z)| \leq \sqrt{e}\cdot \exp(C\|z\|_{\infty}^2)
\]
\[
C = 2(\sum_j |a(e_j)|)^2
+2 \sum |\Re [a(e_j+e_k)]|.
\]

The following is a standard result.  See Lemma 2.8 of \cite{BB} for instance.

\begin{lemma} \label{Polya2}
If $p\in \C[z_1,\dots, z_n]$ has no zeros in $\C_+^n$ 
then for $z=x+iy\in \C_{+}^n$
\[
|p(x+iy)| \geq |p(x-iy)|.
\]
\end{lemma}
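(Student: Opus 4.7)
The plan is to reduce to the one variable case by restricting $p$ to a complex line through $x$ in the direction $y$. Given $z = x+iy \in \C_+^n$, so that $y_j > 0$ for every $j$, consider the one variable polynomial
\[
h(w) := p(x_1 + w y_1, \ldots, x_n + w y_n).
\]
For $w \in \C_+$, each coordinate $x_j + w y_j$ has imaginary part $y_j \Im w > 0$, so the argument lies in $\C_+^n$ and hence $h(w) \neq 0$ by stability of $p$. Thus $h$ is a stable one variable polynomial (and not identically zero, since it is nonzero at some point). The desired inequality $|p(x+iy)| \geq |p(x-iy)|$ is exactly $|h(i)| \geq |h(-i)|$.

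Next I would prove the single variable fact: a stable polynomial $h \in \C[w]$ satisfies $|h(w)| \geq |h(\bar w)|$ for every $w \in \C_+$. Factor $h(w) = c \prod_j (w - \al_j)$; stability forces $\Im \al_j \leq 0$ for each root. A direct expansion gives
\[
|w - \al_j|^2 - |\bar w - \al_j|^2 = (w - \bar w)(\al_j - \bar \al_j) = -4\,(\Im w)(\Im \al_j),
\]
which is non-negative whenever $\Im w > 0$. Multiplying these pointwise inequalities over $j$ and specializing to $w = i$ produces $|h(i)| \geq |h(-i)|$, which is the lemma.

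There is no real obstacle here; the proof is a standard linear slice reduction combined with the classical Hermite--Biehler style reflection comparison in one variable. The only minor point to keep straight is that the strict positivity $y_j > 0$ (built into the hypothesis $z \in \C_+^n$) is what guarantees that the line $w \mapsto x + wy$ maps $\C_+$ into $\C_+^n$, so that $h$ genuinely inherits stability from $p$.
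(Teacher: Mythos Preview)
Your proof is correct and follows essentially the same route as the paper: restrict $p$ to the complex line $w\mapsto x+wy$ to get a one-variable stable polynomial, factor it, and verify the reflection inequality factor by factor. The only cosmetic difference is that the paper writes the factors as $(1+\alpha\zeta)$ with $\Im\alpha\le 0$ and checks $|1+i\alpha|\ge|1-i\alpha|$, whereas you use the root form $(w-\alpha_j)$ and compute $|w-\alpha_j|^2-|\bar w-\alpha_j|^2=-4(\Im w)(\Im\alpha_j)$; these are equivalent.
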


\begin{proof}
The one variable polynomial $q(\zeta) = p(x+\zeta y)$
has no zeros in $\C_+$.  Then, $q$ can be factored as a product of terms of the form $(1+\alpha \zeta)$ where
$\Im \alpha \leq 0$.  We can then check directly that
\[
|1+i \alpha| \geq |1-i\alpha|
\]
which implies $|q(i)|\geq |q(-i)|$.
\end{proof}

\begin{lemma} \label{Polya3}
If $p\in \C[z_1,\dots, z_n]$ has no zeros in $\C_{+}^n$
and if $0\leq y \leq \tilde{y}$ then for any $x\in \R^n$
\[
|p(x+iy)| \leq |p(x+i\tilde{y})|.
\]
\end{lemma}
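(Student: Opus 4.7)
The plan is to reduce the multivariable inequality to a one-variable statement by raising the imaginary parts one coordinate at a time. First I would form the chain $y = y^{(0)} \leq y^{(1)} \leq \cdots \leq y^{(n)} = \tilde y$ where $y^{(k)} = y^{(k-1)} + (\tilde y_k - y_k) e_k$, so that consecutive vectors differ only in the $k$th coordinate; by transitivity it suffices to prove $|p(x + iy^{(k-1)})| \leq |p(x + iy^{(k)})|$ for each $k$.

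For a single step, freeze the $n-1$ non-$k$th coordinates at their values in $x + iy^{(k)}$ (for $j<k$) or $x + iy^{(k-1)}$ (for $j>k$), and let $q(z) \in \C[z]$ be the resulting one-variable polynomial in the $k$th slot. The inequality becomes $|q(x_k + iy_k)| \leq |q(x_k + i\tilde y_k)|$, so the whole lemma reduces to the one-variable claim: if $q \in \C[z]$ has no zeros in $\C_+$, then $t \mapsto |q(\sigma + it)|$ is nondecreasing on $[0,\infty)$ for every $\sigma \in \R$. This is immediate from the factorization $q(z) = c\prod_\ell (z - \zeta_\ell)$ with $\Im \zeta_\ell \leq 0$, since
\[
|q(\sigma + it)|^2 = |c|^2 \prod_\ell \bigl((\sigma - \Re \zeta_\ell)^2 + (t - \Im \zeta_\ell)^2\bigr),
\]
and each factor $(t - \Im \zeta_\ell)^2$ has nonnegative derivative $2(t - \Im \zeta_\ell)$ on $[0,\infty)$, so the product is nondecreasing in $t$.

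The main obstacle is verifying that this one-variable restriction $q$ actually has no zeros in $\C_+$. The frozen coordinates have imaginary parts in $[0,\infty)$ that may equal zero, so stability of $p$ on $\C_+^n$ does not immediately apply. I would resolve this by a Hurwitz limiting argument: perturb each frozen coordinate by adding an extra $i\epsilon$ with $\epsilon>0$ to obtain $q_\epsilon$, whose frozen arguments then all lie in $\C_+$, so stability of $p$ forces $q_\epsilon$ to be nonvanishing in $\C_+$. As $\epsilon \to 0^+$, $q_\epsilon \to q$ coefficient-wise and hence locally uniformly, so Hurwitz's theorem leaves two possibilities: either $q \equiv 0$, in which case the desired inequality is trivial, or $q$ inherits the absence of zeros in $\C_+$, completing the reduction.
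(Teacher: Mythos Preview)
Your argument is correct. The paper handles the lemma in one stroke rather than coordinate-by-coordinate: it sets $q(\zeta)=p\bigl(x+iy+\zeta(\tilde y-y)\bigr)$, notes that $q$ has no zeros in $\C_+$ (since $\tilde y-y\ge 0$ forces each coordinate into $\overline{\C_+}$ when $\Im\zeta>0$, with the same boundary caveat you raise), factors $q(\zeta)=q(0)\prod_\ell(1+\alpha_\ell\zeta)$ with $\Im\alpha_\ell\le 0$, and checks $|1+i\alpha_\ell|\ge 1$ to conclude $|q(0)|\le |q(i)|$. Your chain $y^{(0)}\le\cdots\le y^{(n)}$ and the monotonicity of $t\mapsto|q(\sigma+it)|$ arrive at the same place via axis-parallel slices instead of a single diagonal slice; the paper's parametrization is shorter and avoids the bookkeeping, while your version is more explicit about the Hurwitz step needed when some frozen imaginary parts vanish (a point the paper glosses over).
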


\begin{proof}
The one variable polynomial $q(\zeta) = p(x+iy+\zeta(\tilde{y}-y))$ has no zeros in $\C_{+}$.  Factors
of $q$ are of the form $(1+\alpha \zeta)$ with $\Im \alpha \leq 0$.  Since $|1+i\alpha| \geq 1$ we have
$|q(0)| \leq |q(i)|$.
\end{proof}

We can get a slightly better bound on $\R^n$ by
modifying the argument of Theorem \ref{msz}.

\begin{theorem}
Suppose $p\in \C[z_1,\dots, z_n]$ is stable.  If $p(0)=1$ then for $x\in \R^n$
\[
\log |p(x)| \leq \Re(\nabla p(0)\cdot x) + \frac{1}{2}(|\nabla p(0)|^2+ \|\Re(Hp)(0)\|) |x|^2
\]
where $Hp$ is the Hessian matrix of $p$.
\end{theorem}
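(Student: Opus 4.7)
The plan is to adapt the proof of Theorem \ref{msz} by constructing a two-variable stable polynomial $q$ with $q(0)=1$ whose evaluation at a real point of $\C^2$ equals $p(x)$, so that Theorem \ref{bisz} yields the bound directly without incurring the factors $\sqrt{2}$ and $2$ that appear in the complex case.  The key device is to split $x$ into its positive and negative parts: write $x = x^+ - x^-$ componentwise, where $x^+_j = \max(x_j,0)$ and $x^-_j = \max(-x_j,0)$, so that $x^+, x^- \geq 0$ componentwise and the $j$-th entry of $x^+ + x^-$ equals $|x_j|$.

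Fix $\delta > 0$ and set $u = x^+ + \delta \vec{1}$, $v = x^- + \delta \vec{1}$; these are strictly positive componentwise and $u - v = x$.  Define
\[
q(w_1,w_2) = p(w_1 u + w_2 v) \in \C[w_1,w_2].
\]
For $(w_1,w_2)\in\C_+^2$, each component of $w_1 u + w_2 v$ has imaginary part $u_j \Im w_1 + v_j \Im w_2 > 0$, so the argument lies in $\C_+^n$ and $q$ is stable.  Clearly $q(0) = p(0) = 1$ and $q(1,-1) = p(u-v) = p(x)$.  Apply Theorem \ref{bisz} to $q$ at the real point $(w_1,w_2) = (1,-1)$, where $\|(1,-1)\|_\infty = 1$.

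Routine differentiation gives $q_1(0) - q_2(0) = \nabla p(0)\cdot x$, $q_1(0) + q_2(0) = \nabla p(0)\cdot(u+v)$, and $q_{11}(0) + 2q_{12}(0) + q_{22}(0) = (u+v)^T Hp(0)(u+v)$.  Substituting these into the estimate of Theorem \ref{bisz} produces
\[
\log|p(x)| \leq \Re(\nabla p(0)\cdot x) + \tfrac{1}{2}\bigl(|\nabla p(0)\cdot(u+v)|^2 - (u+v)^T \Re(Hp(0))(u+v)\bigr).
\]
Cauchy--Schwarz bounds the first term inside the parentheses by $|\nabla p(0)|^2\,|u+v|^2$, and the definition of the operator norm gives $(u+v)^T \Re(Hp(0))(u+v) \geq -\|\Re(Hp(0))\|\,|u+v|^2$.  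Letting $\delta \to 0$, so that $|u+v|^2 \to \sum_j |x_j|^2 = |x|^2$, yields the claimed inequality.  No serious obstacle arises; the only real insight is that the decomposition $x = x^+ - x^-$ realizes $p(x)$ as the value of a stable two-variable polynomial at a real point of $\C^2$, allowing a direct invocation of Theorem \ref{bisz} and bypassing the complex perturbation $\tilde{y}$ used in Theorem \ref{msz}.
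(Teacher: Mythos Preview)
Your proof is correct and follows essentially the same route as the paper: decompose $x=x^{+}-x^{-}$ into nonnegative parts, form the two-variable polynomial $q(w_1,w_2)=p(w_1x^{+}+w_2x^{-})$, and apply Theorem~\ref{bisz} at the real point $(1,-1)$. Your extra perturbation $\delta\vec{1}$ (to force the argument strictly into $\C_+^n$ and then let $\delta\to0$) is a harmless technical safeguard that the paper handles implicitly; otherwise the arguments are the same.
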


\begin{proof}
We can write $x \in \R^n$ as $x= x_{+} - x_{-}$ where
$(x_+)_j = \begin{cases} x_j & \text{ if } x_j \geq 0 \\ 0 & \text{ if
  } x_j<0 \end{cases}$.  Define
\[
P(z_1,z_2) = p(z_1 x_+ + z_2 x_{-})
\]
which has no zeros in $\C_{+}^2$ and $P(0)=1$.  
Set $S_+ = \{j: x_j \geq 0\}$, $S_{-}=\{j:x_j<0\}$.

Note that 
\[
P_1(z) = \sum_{j\in S_{+}} p_j(z_1x_+ + z_2 x_{-}) |x_j| \qquad P_2(z) = \sum_{j\in S_{-}} p_j(z_1x_+ + z_2 x_{-}) |x_j|
\]
\[
P_{11}(0)= \sum_{j,k\in S_{+}} p_{jk}(0)|x_j||x_k|,  \quad P_{12}(0) =
\sum_{j\in S_{+},k\in S_{-}} p_{jk}(0)|x_j||x_k|, \quad 
P_{22}(0) = \sum_{j,k\in S_{-}} p_{jk}(0) |x_j||x_k|.
\]
Now, since $P(1,-1) = p(x)$ we have
\[
\log|p(x)| \leq \Re( \nabla p(0) \cdot x) 
+ (1/2)(|\nabla p(0)|^2|x|^2 - \Re( \sum_{jk} p_{jk}(0) |x_j||x_k|))
\]
by Theorem \ref{bisz}.

\end{proof}

\section{Multivariable inequalities when $p(0)=0$}\label{sec:msz2}

In this section we prove Theorem \ref{msz2}.
Write the homogeneous 
expansion of $p$
\[
p(z) = \sum_{j=r}^{d} P_j(z).
\]
Notice that $P_r(z)$ is stable itself by Hurwitz's theorem because
\[
P_r(z) = \lim_{t\searrow 0} t^{-r} p(t z)
\]
exhibits $P_r$ as a limit of polynomials with no zeros in $\C_+^{n}$.  

We can make some of the reductions as in the previous section.
We may assume $z= x+iy \in \C_{+}^n$ by Lemma \ref{Polya2}.  
Define $m =\max\{|x_j|,y_j: 1\leq j
\leq n\}$ and $\tilde{y} = m\vec{1}$.
Then, $\tilde{y}\geq \pm x$, $\tilde{y}\geq y$ and $|p(z)| \leq |p(x+i\tilde{y})|$.
Define
\[
q(w_1,w_2) = p(w_1 (\tilde{y} +x) + w_2(\tilde{y}-x))
\]
which is stable and has homogeneous expansion
\[
\sum_{j=r}^{d} Q_j(w) = \sum_{j=r}^{d} P_j(w_1 (\tilde{y} +x) + w_2(\tilde{y}-x)).
\]
All of the terms above are homogeneous of the correct degree but
it is conceivable that the first term vanishes.  Setting $w_1=w_2=1$
we see  the first term evaluates to $P_r(2\tilde{y}) = (2m)^r
P_r(\vec{1})$ which is non-zero.

The data we need for Theorem \ref{bisz2} is:
\[
\begin{aligned}
Q_{j}(\vec{1}) &= (2m)^{j}P_{j}(\vec{1}) \\
\frac{\partial Q_j}{\partial w_1}(\vec{1}) &= (2m)^{j-1}\sum_{k=1}^{n}
\frac{\partial P_j}{\partial z_k}(\vec{1}) (m+x_k) \\
\frac{\partial Q_j}{\partial w_2}(\vec{1}) &= (2m)^{j-1}\sum_{k=1}^{n}
\frac{\partial P_j}{\partial z_k}(\vec{1}) (m-x_k) 
\end{aligned}
\]
and so (omitting some details)
\[
\left|q\left(\frac{i+1}{2},\frac{i-1}{2}\right)\right| \leq (2m)^{r}|P_r(\vec{1})| e^{-r/2}
\exp(\Re( A ) + \frac{1}{2} B )
\]
where
\[
A = \frac{1}{P_r(\vec{1})} \left( \nabla P_r(\vec{1})\cdot
  (x+i\tilde{y})\left(\frac{1}{2m}- \frac{P_{r+1}(\vec{1})}{P_r(\vec{1})}\right)
  + \nabla P_{r+1}(\vec{1})\cdot (x+i\tilde{y})\right)
\]
\[
B = \frac{1}{2}\left( (2m)^2
  \left|\frac{P_{r+1}(\vec{1})}{P_r(\vec{1})}\right|^2 - 2(2m)^2
  \Re\left(\frac{P_{r+2}(\vec{1})}{P_r(\vec{1})}\right) +r\right).
\]
Note $m\leq \|z\|_{\infty}$ and $\|x+i\tilde{y}\|_{\infty} \leq
\sqrt{2} m$.  We can crudely estimate $A$:
\[
\begin{aligned}
|A| &\leq \frac{1}{|P_r(\vec{1})|} \left( \|\nabla
  P_r(\vec{1})\|_{1}(\sqrt{2}m)\left(\frac{1}{2m} +
  \left|\frac{P_{r+1}(\vec{1})}{P_r(\vec{1})}\right|\right)
+ \|\nabla P_{r+1}(\vec{1})\|_{1} \sqrt{2}m\right) \\
&\leq \frac{\|\nabla P_r(\vec{1})\|_1}{\sqrt{2} |P_r(\vec{1})|}
+ \frac{\sqrt{2}}{|P_r(\vec{1})|}\left( \|\nabla P_r(\vec{1})\|_1
    \left|\frac{P_{r+1}(\vec{1})}{P_r(\vec{1})}\right| +
\|\nabla P_{r+1}(\vec{1})\|_{1}\right) \|z\|_{\infty}
\end{aligned}
\]
and
\[
|B| \leq 2\|z\|^2_{\infty}\left(
\left|\frac{P_{r+1}(\vec{1})}{P_r(\vec{1})}\right|^2 - 2
  \Re\left(\frac{P_{r+2}(\vec{1})}{P_r(\vec{1})}\right)\right) + r/2.
\]
Here we use $\|\cdot \|_1$ for $\ell^1$ norm of a vector.
Putting everything together
\[
|p(z)| \leq \|z\|_{\infty}^{r} |P_r(\vec{1})|
\exp(C_0+C_1\|z\|_{\infty} + C_2\|z\|_{\infty}^{2})
\]
where 
\[
C_0 = r(\log(2)-1/4) + \frac{\|\nabla P_r(\vec{1})\|_1}{\sqrt{2} |P_r(\vec{1})|}
\]
\[
C_1 = \frac{\sqrt{2}}{|P_r(\vec{1})|}\left( \|\nabla P_r(\vec{1})\|_1
    \left|\frac{P_{r+1}(\vec{1})}{P_r(\vec{1})}\right| +
\|\nabla P_{r+1}(\vec{1})\|_{1}\right)
\]
\[
C_2 = \left(
\left|\frac{P_{r+1}(\vec{1})}{P_r(\vec{1})}\right|^2 - 2
  \Re\left(\frac{P_{r+2}(\vec{1})}{P_r(\vec{1})}\right)\right).
\]

\begin{bibdiv}
\begin{biblist}

\bib{BB}{article}{
   author={Borcea, Julius},
   author={Br\"and\'en, Petter},
   title={The Lee-Yang and P\'olya-Schur programs. I. Linear operators
   preserving stability},
   journal={Invent. Math.},
   volume={177},
   date={2009},
   number={3},
   pages={541--569},
   issn={0020-9910},
   review={\MR{2534100}},
   doi={10.1007/s00222-009-0189-3},
}

\bib{Grinshpan}{article}{
   author={Grinshpan, Anatolii},
   author={Kaliuzhnyi-Verbovetskyi, Dmitry S.},
   author={Vinnikov, Victor},
   author={Woerdeman, Hugo J.},
   title={Stable and real-zero polynomials in two variables},
   journal={Multidimens. Syst. Signal Process.},
   volume={27},
   date={2016},
   number={1},
   pages={1--26},
   issn={0923-6082},
   review={\MR{3441374}},
   doi={10.1007/s11045-014-0286-3},
}

\bib{HV}{article}{
   author={Helton, J. William},
   author={Vinnikov, Victor},
   title={Linear matrix inequality representation of sets},
   journal={Comm. Pure Appl. Math.},
   volume={60},
   date={2007},
   number={5},
   pages={654--674},
   issn={0010-3640},
   review={\MR{2292953}},
   doi={10.1002/cpa.20155},
}

\bib{GK}{article}{
   author={Knese, Greg},
   title={Determinantal representations of semihyperbolic polynomials},
   journal={Michigan Math. J.},
   volume={65},
   date={2016},
   number={3},
   pages={473--487},
   issn={0026-2285},
   review={\MR{3542761}},
   doi={10.1307/mmj/1472066143},
}

\bib{Levin}{book}{
   author={Levin, B. Ja.},
   title={Distribution of zeros of entire functions},
   series={Translations of Mathematical Monographs},
   volume={5},
   edition={Revised edition},
   note={Translated from the Russian by R. P. Boas, J. M. Danskin, F. M.
   Goodspeed, J. Korevaar, A. L. Shields and H. P. Thielman},
   publisher={American Mathematical Society, Providence, R.I.},
   date={1980},
   pages={xii+523},
   isbn={0-8218-4505-5},
   review={\MR{589888}},
}

\bib{Sz}{article}{
   author={Sz\'asz, Otto},
   title={On sequences of polynomials and the distribution of their zeros},
   journal={Bull. Amer. Math. Soc.},
   volume={49},
   date={1943},
   pages={377--383},
   issn={0002-9904},
   review={\MR{0008274}},
   doi={10.1090/S0002-9904-1943-07919-0},
}

\bib{VV}{article}{
   author={Vinnikov, Victor},
   title={LMI representations of convex semialgebraic sets and determinantal
   representations of algebraic hypersurfaces: past, present, and future},
   conference={
      title={Mathematical methods in systems, optimization, and control},
   },
   book={
      series={Oper. Theory Adv. Appl.},
      volume={222},
      publisher={Birkh\"auser/Springer Basel AG, Basel},
   },
   date={2012},
   pages={325--349},
   review={\MR{2962792}},
   doi={10.1007/978-3-0348-0411-0-23},
}

\end{biblist}
\end{bibdiv}

\end{document}